\documentclass[12pt]{amsart}
\usepackage{float} 
\usepackage{amsmath,amsthm,amssymb,verbatim,stmaryrd,xcolor,microtype,graphicx,aliascnt,needspace,mathtools,booktabs} 
\usepackage[shortlabels]{enumitem}
\usepackage[T1]{fontenc}
\usepackage[utf8]{inputenc}
\usepackage[english]{babel} % allow for hyphenation for words with dash: e.g. ``non-archimedean'' does not break, but ``non\hyph archimedean'' does
\usepackage[top=3.2cm,bottom=3.2cm,left=3.5cm,right=3.5cm]{geometry}
\usepackage[bookmarksopen=true,bookmarksopenlevel=2,bookmarksdepth=2,linktoc=page,colorlinks,linkcolor={red!70!black},citecolor={red!80!black},urlcolor={blue!80!black},pdfpagemode=UseOutlines,pdfstartview={fitH}]{hyperref}
\usepackage{longtable}
\usepackage[utf8]{inputenc}
\usepackage{adjustbox}
\usepackage{caption}
\usepackage{latexsym}

\usepackage{tikz}
\usepackage{tikz-cd}
\usepackage[mathcal]{euscript}      % use the euscript caligraphic font with \mathscr{...}

\usepackage[colorinlistoftodos,bordercolor=orange,backgroundcolor=orange!20,linecolor=orange,textsize=footnotesize]{todonotes} \makeatletter \providecommand \@dotsep{5} \def\listtodoname{List of Todos} \def\listoftodos{\@starttoc{tdo}\listtodoname} \makeatother %\Todo{} for margin notes, suppress in pdf with option [disable]

\usepackage{newtxtext,newtxmath}                              % times font
\usepackage{bbm}

\allowdisplaybreaks % allows equation environments to break between the pages

\usepackage{etoolbox}
\makeatletter
\patchcmd{\@startsection}{\@afterindenttrue}{\@afterindentfalse}{}{}             %omit indentation of the first paragraph of a section
\patchcmd{\part}{\bfseries}{\bfseries\LARGE}{}{}
\patchcmd{\section}{\scshape}{\bfseries}{}{}\renewcommand{\@secnumfont}{\bfseries} %boldface no smallcaps section and subsection titles with numbers
\patchcmd{\@settitle}{\uppercasenonmath\@title}{\large}{}{}
\patchcmd{\@setauthors}{\MakeUppercase}{}{}{}
\addto{\captionsenglish}{} %boldface no smallcaps Abstract
\addto{\captionsenglish}{} %boldface no smallcaps Figure
\addto{\captionsenglish}{} %boldface no smallcaps Table
\makeatother

\usepackage{fancyhdr}

\addto\extrasenglish{}  % Use capitalized ``Section'' (also for (sub)subsections
\theoremstyle{plain}
\newtheorem{thm}{Theorem}[section] % provides command \autoref{}, which produces citations like ``Theorem 1.1''.
\newaliascnt{lemma}{thm}\newtheorem{lemma}[lemma]{Lemma}\aliascntresetthe{lemma}
\newaliascnt{cor}{thm}\newtheorem{cor}[cor]{Corollary}\aliascntresetthe{cor}
\newaliascnt{prop}{thm}\newtheorem{prop}[prop]{Proposition}\aliascntresetthe{prop}
\newaliascnt{claim}{thm}\aliascntresetthe{claim}
\newtheorem*{claim*}{Claim}
\newtheorem*{thm*}{Theorem}
\newtheorem*{lem*}{Lemma}
\newtheorem*{cor*}{Corollary}

\theoremstyle{definition}
\newaliascnt{df}{thm}\newtheorem{df}[df]{Definition}\aliascntresetthe{df}
\newaliascnt{rem}{thm}\newtheorem{rem}[rem]{Remark}\aliascntresetthe{rem}
\newaliascnt{ex}{thm}\newtheorem{ex}[ex]{Example}\aliascntresetthe{ex}
\newaliascnt{conj}{thm}\aliascntresetthe{conj}
\newaliascnt{problem}{thm}\aliascntresetthe{problem}

\newtheorem*{df*}{Definition}
\newtheorem*{ex*}{Example}
\newtheorem*{rem*}{Remark}

\DeclareRobustCommand{\gobblefour}[5]{}    % Command \SkipTocEntry for suppressing a section title in TOC
\DeclareMathOperator{\yo}{{h}}
\DeclareMathOperator{\Hom}{Hom}

\DeclareMathOperator{\Aff}{Aff}
\DeclareMathOperator{\Pres}{Pres}
\DeclareMathOperator{\Sch}{Sch}
\DeclareMathOperator{\SAff}{SAff}
\DeclareMathOperator{\SSch}{SSch}
\DeclareMathOperator{\Mf}{Mf}

\DeclareMathOperator{\ASC}{ASC}

\DeclareMathOperator{\Spec}{Spec}
\DeclareMathOperator{\Cong}{Cong}
\DeclareMathOperator{\PSpec}{PSpec}
\DeclareMathOperator{\Sh}{Sh}
\DeclareMathOperator{\PrSh}{PrSh}

\DeclareMathOperator{\colim}{colim\,}
\DeclareMathOperator{\Sets}{{Sets}}
\DeclareMathOperator{\TopSp}{{Top}}

\DeclareMathOperator{\Rings}{{Rings}}
\DeclareMathOperator{\SRings}{{SRings}}

\DeclareMathOperator{\Alg}{{Alg}}

\DeclareMathOperator{\Loc}{{Loc}}

\DeclareMathOperator{\res}{{res}}

\newcommand{\A}{{\mathbb A}}

\newcommand{\C}{{\mathbb C}}

\newcommand{\N}{{\mathbb N}}

\newcommand{\R}{{\mathbb R}}

\newcommand{\Z}{{\mathbb Z}}

\newcommand{\cB}{{\mathcal B}}
\newcommand{\cC}{{\mathcal C}}

\newcommand{\cF}{{\mathcal F}}

\newcommand{\cI}{{\mathcal I}}

\newcommand{\cO}{{\mathcal O}}
\newcommand{\cP}{{\mathcal P}}

\newcommand{\cS}{{\mathcal S}}
\newcommand{\cT}{{\mathcal T}}
\newcommand{\cU}{{\mathcal U}}
\newcommand{\cV}{{\mathcal V}}
\newcommand{\cW}{{\mathcal W}}
\newcommand{\cX}{{\mathcal X}}

\newcommand{\fc}{{\mathfrak c}}

\newcommand{\fp}{{\mathfrak p}}

\newcommand{\fs}{{\mathfrak s}}

\newcommand{\0}{{\mathbf{0}}}
\newcommand{\1}{{\mathbf{1}}}
\newcommand{\bbone}{{\mathbbm{1}}}
\renewcommand{\phi}{{\varphi}}

\newcommand{\im}{\textup{im}}

\newcommand{\can}{\textup{can}}

\renewcommand{\top}{\textup{top}}
\newcommand{\loc}{\textup{loc}}
\newcommand{\op}{\textup{op}}

\renewcommand{\leq}{\leqslant}
\newcommand{\gen}[1]{\langle #1 \rangle}

\renewcommand{\emptyset}{\varnothing}

\DeclareMathOperator{\Op}{Open}

\newcommand{\Grot}{\mathcal{T} }              %Grothendieck topology
\newcommand{\bset}[1]{\{ #1  \}}

\title{Toolkit for the algebraic geometer\\[10pt] \normalsize Lecture notes}
\author{Sourayan Banerjee}
\address{\rm Sourayan Banerjee, Indian Institute of Technology Kanpur, India }
\email{sourayan244@gmail.com}

\author{Oliver Lorscheid}
\address{\rm Oliver Lorscheid, University of Groningen, the Netherlands}
\email{o.lorscheid@rug.nl}

\author{Alejandro Mart\'inez M\'endez}
\address{\rm Alejandro Mart\'inez M\'endez, University of Groningen, the Netherlands}
\email{a.m.m.martinez.mendez@rug.nl}

\author{Alejandro Vargas}
\address{\rm Alejandro Vargas, University of Warwick, United Kingdom. }
\email{alejandro@vargas.page}

\hypersetup{pdftitle={Toolkit for the algebraic geometer},pdfauthor={Sourayan Banerjee, Oliver Lorscheid, Alejandro Mart\'inez M\'endez and Alejandro Vargas}}

\begin{document}

\begin{abstract}
 In this text, we outline a theory of schemes associated with a site, which generalizes a variety of geometries, such as manifolds, schemes, analytic spaces, simplicial complexes, and more. We present an abstract process of gluing model spaces via sheaf theory and recover \emph{a posteriori} the underlying topological spaces that are often present in the construction of such geometric objects. We apply this formalism to semiring schemes and reason why the usual definition of semiring schemes has to be considered as the good approach to the geometry of semirings.
\end{abstract}

\maketitle

\begin{small} \tableofcontents \end{small}

%=======================================================
\section*{Introduction}
%\label{sec:introduction}
%=======================================================

In this text, we exhibit the mechanism behind the construction of geometric objects from a class of model spaces, such as manifolds, which are formed from open subsets in Euclidean space, and schemes, which are formed from spectra of rings. 

A necessary input for gluing model spaces along open subspaces is not only a notion of open subobjects, but
also a notion of coverings by these open subobjects. This information is accurately captured by Grothendieck pretopologies. It turns out that a Grothendieck pretopology is sufficient for the construction of geometric objects that look locally like our model spaces.

%=======================================================
\subsection*{Three perspectives on geometry}
%=======================================================

There are three prominent approaches to construct geometric objects from model spaces, which we review in the examples of smooth manifolds and schemes in the following; cf.\ \autoref{table:PointsOfView} for an overview.

%=======================================================
\subsubsection*{Definition in terms of charts}
%=======================================================

A smooth manifold is typically defined as a (second countable) topological space $X$ together with an equivalence class of a smooth atlas, which is a collection of charts $\varphi_i:U_i\to X$ from open subsets $U_i$ of $\R^n$ for which the transition maps $\varphi^{-1}_j\circ\varphi_i$ are smooth homeomorphisms.

%=======================================================
\subsubsection*{Definition in terms of a structure sheaf}
%=======================================================

A scheme is typically defined as a topological space $X$ together with a structure sheaf $\cO_X$ in $\Rings$ such that every point has a neighbourhood $U$ that is isomorphic to the spectrum of $\cO_X(U)$. 

%=======================================================
\subsubsection*{Definition as a functor of points}
%=======================================================

A well-known fact in algebraic geometry is that the category of schemes can be embedded into the functor category from rings to sets by identifying a scheme $X$ with the functor of points $\Hom(\Spec(-),X)$. It is possible to characterize schemes from the perspective of functors of points; cf.\ \cite[4.1]{GortzWedhorn2010} and~\cite{ToenVaquie2009}.

%=======================================================
\begin{table}[t]
 \caption{Three definitions of manifolds and schemes}
 \label{table:PointsOfView}
 \begin{tabular}{lp{5.5cm}p{5.5cm}}
  \toprule
  Definition & Smooth manifolds  &  $\text{Schemes}$ \\
  \midrule
%------------------------Definition A----------------------------
  \parbox{1.8cm}{Topological space with charts} &
  \parbox{5.5cm}{$X\in\TopSp$ with an open cover by charts $\varphi_i:U_i\to X$ with $U_i\subset\R^n$ open such that $\varphi_j^{-1} \circ \varphi_i$ is smooth}  &
  \parbox{5.5cm}{$X\in\TopSp$ with a cover by open immersions $\iota_i : U_i \to X$ such that $\iota_j^{-1} \circ \iota_i$ is a scheme morphism}  \cr \\
%------------------------Definition B----------------------------
  \parbox{1.8cm}{Space with structure sheaf} &
  \parbox{5.5cm}{$X\in\TopSp$ with a structure sheaf $\cO_X(-) = C^\infty(-, \mathbb R)$ of smooth functions}  &
  \parbox{5.5cm}{$X\in\TopSp$ with a structure sheaf $\cO_X(-)=\Hom(-,\A^1_\Z)$ of regular functions}  \cr \\
%------------------------Definition C----------------------------
  \parbox{1.5cm}{Functor of points} &
  \parbox{5.5cm}{A functor from open subsets of $\R^n$ to $\Sets$ of the form $C^\infty(-,X)$}  &
  \parbox{5.5cm}{A functor $h_X : \Rings \to \Sets$ of the form $R \mapsto X(R)$}  \cr \\
\toprule
\end{tabular}
\end{table}
%=======================================================

One can pass forth and back between these definitions: a scheme can be considered as a topological space together with an equivalence class of atlases, i.e., coverings by affine schemes. A smooth manifold can be considered equivalently as a topological space together with a sheaf of smooth functions, and this viewpoint is typically used for the generalization of smooth manifolds to analytic spaces. Furthermore, a smooth manifold $X$ can be identified with the functor of points $\cC^\infty(-,X)$ on the category of open subsets of Euclidean space $\R^n$ and smooth maps. See \cite[Ch.\ 1]{eh06} for an exposition of these different viewpoints on schemes.

%=======================================================
\subsection*{Top down approach to \texorpdfstring{$\fs$}{s}-schemes}
%=======================================================
In the generality of a (possibly abstract) category $\Aff$ of model spaces together with a Grothendieck pretopology $\cT$, we do not know \emph{a priori} what the underlying topological space of objects in $\Aff$ might be. The reader might want to think of the category $\Aff=\SRings^\op$ of affine semiring schemes: it is not clear from the outset what is the good definition of the prime spectrum of a semiring as a topological space (cf.\ \cite{GKMX}, which provides two different constructions that are based on prime ideals and prime $k$-ideals, respectively).

This forces us to take the third route through functors of points, which are sheaves on the site $\fs=(\Aff,\cT)$. Intuitively speaking, $\cT$ provides us with a notion of open subfunctors. We define an $\fs$-scheme as the colimit sheaf of a ``monodromy free'' system of such open subfunctors.

This provides a rigorous definition of the category $\Sch_\fs$ of $\fs$-schemes and recovers many categories of geometric objects, such as manifolds, schemes, analytic spaces, simplicial complexes, and more.

%=======================================================
\subsection*{Points for \texorpdfstring{$\fs$}{s}-schemes}
%=======================================================
\emph{A posteriori}, we can assign topological spaces to the objects $X$ in $\Aff$ as follows. The Grothendieck pretopology $\cT$ provides a notion of open subschemes of $X$. This collection of open subschemes forms a locale, and a result in \cite{toolkit} shows that this locale coincides with the collection of open subsets of an associated topological space $\underline X$ via Stone duality.

If $\Aff$ is anti-equivalent to a category $\Alg$ of algebraic objects, e.g.\ (semi)rings, then $\underline X$ comes equipped with a sheaf in $\Alg$. This shows that the underlying topological space of an object $X$ is intrinsically determined by the Grothendieck pretopology $\cT$.

In many interesting examples, such as manifold or schemes, the underlying topological space $\underline X$ turns out to coincide with the given topological space, i.e., the open subset of $\R^n$ in the case of manifolds and the prime spectrum of a ring in the case of an affine scheme $\Spec R$. In the case of semiring schemes, $\underline X$ coincides with the space of all prime ideals; see \cite{L26} for a proof.

%=======================================================
\subsection*{Relation to other approaches}
%=======================================================
This text is based on the forthcoming paper \cite{toolkit} and serves as a reader friendly outlook to this theory. The ideas of this text are not entirely new, but bear strong similarities with \cite[section 2.2]{Toen-Vaquie08} and \cite{lurie2009derivedalgebraicgeometryv}. The main novelty of our approach is that it is stripped from all unnecessary technical conditions that are present in the other approaches. Moreover, we continue the study of our theory beyond the point that this is done in \cite{Toen-Vaquie08} and \cite{lurie2009derivedalgebraicgeometryv}.

%=======================================================
\subsection*{Content outline}
%=======================================================
This text is organized as follows. In \autoref{section: RecapOnSheafTheory}, we provide a summary of the necessary concepts from sheaf theory, such as Grothendieck pretopologies, sheaves on a site and the Yoneda embedding.

In \autoref{section: s-schemes}, we introduce $\fs$-presentations and $\fs$-schemes. We provide some fundamental facts for this theory and conclude with proof outlines for the facts that smooth manifolds and schemes can be considered as particular instances of $\fs$-schemes, as indicated in this introduction.

In \autoref{section: PointsInSSchemes}, we review Stone duality and apply it to the locale of open subschemes, which equips an affine $\fs$-scheme with an underlying topological space $\underline X$. We show that this recovers open subsets of $\R^n$ and affine schemes as topological spaces.

In \autoref{section: further examples}, we discuss further examples of $\fs$-scchemes, such as various types of manifolds and simplicial complexes.

In \autoref{section: semiring schemes}, we apply our theory to semiring schemes, which provides an alternative perspective on semiring schemes (compared to \cite{GKMX}). We explain the links between these two approaches (with proofs in \cite{L26}). Finally, we explain how to build other topological spaces (of prime $k$-ideals and prime congruences) for semiring schemes, in a way that avoids a loss of information due to Grothendieck topologies for which the Yoneda functor fails to be an embedding of categories.

%=======================================================
\subsection*{Disclaimer}
%=======================================================

In this text, we ignore any set theoretic issue related to categories of functors, which can be solved by standard methods, such as Grothendieck universes or hierarchies of classes.

%=======================================================
\subsection*{Acknowledgements}
%=======================================================
The authors thank the organizers Marc Masdeu and Joaquim Roe of the Barcelona workshop on the Geometry of Semirings for their hopsitality and for making this event happening. AV is supported by the UK Engineering and Physical Sciences Research Council (EPSRC), grant number EP/X02752X/1.

%%%%%%%%%%%%%%%%%%%%%%%%%%%%%%%%%%%%%%%%%%%%%%%%%%%%%%%%%%%%%%%%%%%%%%%%%%%%%%%%%%%%%%%%%%%%%%
%%%%%%%%%%%%%%%%%%%%%%%%%%%%%%%%%%%%%%%%%%%%%%%%%%%%%%%%%%%%%%%%%%%%%%%%%%%%%%%%%%%%%%%%%%%%%%

%=======================================================
\section{Recap on sheaf theory}
\label{section: RecapOnSheafTheory}
%=======================================================

Let $\Aff$ be a category, which we intend to interpret as a category of affine schemes. The reader might want to think of one of our running examples: the category $\Aff_{\cC^\infty}$ of ``smooth'' open subsets of Euclidean space (equipped with smooth maps) or the opposite category $\Aff_\Z$ of $\Rings$.

In order to pass from affine schemes to more general schemes, we require the notions of covering families and sheaves. In this section, we review the necessary concepts.

%=======================================================
\subsection{Sites}
%=======================================================

A \emph{Grothendieck pretopology} $\Grot$ on $\Aff$ is an assignment to each object $X\in\Aff$ of a collection of families $\{\iota_i:U_i\to X\}$ of morphisms in $\Aff$, that satisfies the following properties:
\begin{enumerate}
 \item \emph{Isomorphisms:} $\bset{\varphi : Y \to X}$ is in $\cT$ for every isomorphism $\varphi:Y\to X$.
 \item \emph{Local character:} if $\bset{\iota_i:U_i\to X}_{i \in I}$ and $\bset{\iota_{ij}:U_{ij} \to  U_j}_{j \in J_i}$ (one family for each $i\in I$) are in~$\cT$, then also $\bset{ \iota_i\circ\iota_{ij}:U_{ij}\to X}_{i \in I, j \in J_i}$ is in $\cT$.
 \item \emph{Base change:} if $\bset{\iota_i:U_i\to X}_{i\in I}$ is in $\cT$ and $f:Y\to X$ is a morphism in $\Aff$, then $\Aff$ contains the fibre product $U_i \times_X Y$ for all $i\in I$ and the family of canonical projections $\{U_i \times_X Y \to Y\}$ is in $\cT$.
\end{enumerate}
We call the families in $\cT$ the \emph{covering families} of $\cT$. A \emph{site} is a pair $\fs:=(\Aff, \Grot)$ of a category $\Aff$ together with a Grothendieck pretopology $\Grot$ for $\Aff$.

In the following examples, we call a family $\{\iota_i:U_i\to X\}$ of maps \emph{jointly surjective} if every point of $X$ is contained in the image of some $\iota_i$. This is a condition that holds for the covering families of many prominent Grothendieck pretopologies.

\begin{ex}\label{ex: Grothendieck topologies from topological spaces}
 Grothendieck pretopologies generalize usual topologies in the following way: Let $X$ be a topological space, $\cB$ a basis for the topology of $X$ that is closed under finite intersections and $\Op(X)$ the category of all open subsets of $X$ with the inclusion maps as morphisms. Then the collection of all jointly surjective families $\{\iota_i:U_i\to V\}$ with $U_i,V\in\cB$ is a Grothendieck pretopology for $\Op(X)$.
\end{ex}

\begin{ex}\label{ex: Euclidean topology}
 The \emph{Euclidean topology} for the category $\Aff_{\cC^\infty}$ of ``smooth'' open subsets in Euclidean space is the Grothendieck pretopology that consists of jointly surjective families $\{\iota_i:U_i\to X\}$ of smooth open topological embeddings.
\end{ex}

\begin{ex}\label{ex: Zariski topology}
 The \emph{Zariski topology} for the category $\Aff_{\Z}$ of usual affine schemes is the Grothendieck pretopology that consists of jointly surjective families $\{\iota_i:U_i\to X\}$ of open immersions.
\end{ex}

\begin{ex}\label{ex: principal Zariski topology}
 A variation of the Zariski topology is the \emph{principal Zariski topology} that consists of jointly surjective families $\{\iota_i:U_i\to X\}$ of \emph{principal open immersions}, which are open immersions for which the induced maps $\Gamma X\to \Gamma U_i$ between the respective coordinate rings is a localization of $\Gamma X$ at an element $h_i$. Our generalization to semiring schemes in \autoref{section: semiring schemes} is based on the principal Zariski topology.
\end{ex}

%=======================================================
\subsection{Sheaves}
\label{sub:Sheaves}
%=======================================================

A \emph{presheaf on $\Aff$} is a functor $\cF : \Aff^{\text{op}} \to \Sets$ and a morphism of presheaves is a natural transformation. We denote the category of presheaves on $\Aff$ by $\PrSh_{\Aff}$.

Let $\cS=\{\iota_i:U_i\to X\}$ be a covering family in $\cT$. Given an element $f\in \cF(X)$, we denote its pullback along $\iota_i:U_i\to X$ as $f_i=\iota_i^\ast(f)$. We denote by $f_{i,j}=\res_{U_i,U_{ij}}(f_i)$ the pullback of $f_i\in\cF(U_i)$ from $U_i$ to the fibre product $U_{ij}=U_i\times_XU_j$. Note that if $f_i\in\cF(U_i)$ and $f_j\in \cF(U_j)$ comes from an element $f\in\cF(X)$, then $f_{i,j}=f_{j,i}$. This establishes a map
\[
 \begin{array}{crcl}
  \Psi_\cS: & \cF(X) & \longrightarrow & \big\{ (f_i)\in\prod\cF(U_i) \, \big| \, f_{i,j}=f_{j,i} \big\} \\
                & f      & \longmapsto     & (\iota_i^\ast(f)).
 \end{array}
\]

A \emph{sheaf on $\fs=(\Aff,\cT)$} is a presheaf $\cF$ on $\Aff$ that satisfies the \emph{sheaf axiom for $\cT$}: the map $\Psi_\cS$ is a bijection for every covering family $\cS$ in $\cT$. A \emph{morphism of sheaves on $\fs$} is a morphism of presheaves, which defines the category $\Sh_\fs$ of sheaves on $\fs$ as a full subcategory of $\PrSh_{\Aff}$. 

This embedding comes with a left adjoint $(-)^\#:\PrSh_{\Aff}\to\Sh_\fs$, which sends a presheaf $\cF$to its \emph{sheafification} $\cF^\#$.

%=======================================================
\subsection{The Yoneda embedding}
\label{sub:Yoneda}
%=======================================================

Let $\fs=(\Aff,\cT)$ be a site. 
A presheaf $\cF$ is \emph{representable} if it is in the essential image of the embedding $\tilde \yo:\Aff\to\PrSh_{\Aff}$, given by $\tilde \yo_Y=\Hom(-,Y)$. The \emph{Yoneda functor} $\yo:\Aff\to\Sh_\fs$ is the composition of $\tilde\yo$ with the sheafification functor $(-)^\#:\PrSh_{\Aff}\to\Sh_\fs$.

We call a family $\cS=\{\varphi:U_i\to X\}$ \emph{subcanonical} if $\Hom(-,Y)$ satisfies the sheaf axiom with respect to $\cS$ for every $Y$ in $\Aff$, i.e., $\Psi_\cS$ is a bijection for every representable presheaf $\cF=\Hom(-,Y)$. A Grothendieck pretopology $\cT$ is \emph{subcanonical} if all of its covering families are subcanonical. The \emph{canonical topology $\cT_\can$ on $\Aff$} consists of all subcanonical families $\cS$ and is the finest subcanonical Grothendieck pretopology on $\Aff$.

In other words, the following are equivalent:
\begin{enumerate}
 \item $\cT$ is subcanonical;
 \item $\cT$ is contained in $\cT_\can$;
 \item $\Hom(-,Y)$ is a sheaf on $\fs$ for all $Y$ in $\Aff$;
 \item the Yoneda functor $\yo:\Aff\to\Sh_\fs$ is fully faithful.
\end{enumerate}
If $\cT$ is subcanonical, we can thus consider objects $Y$ of $\Aff$ as sheaves $\yo_Y$ on $\fs=(\Aff,\cT)$.

\bigskip

\begin{center}
 {\bf\textit{Throughout the rest of the text, we assume that $\cT$ is subcanonical.}}
\end{center}

%%%%%%%%%%%%%%%%%%%%%%%%%%%%%%%%%%%%%%%%%%%%%%%%%%%%%%%%%%%%%%%%%%%%%%%%%%%%%%%%%%%%%%%%%%%%%%
%%%%%%%%%%%%%%%%%%%%%%%%%%%%%%%%%%%%%%%%%%%%%%%%%%%%%%%%%%%%%%%%%%%%%%%%%%%%%%%%%%%%%%%%%%%%%%

%=======================================================
\section{\texorpdfstring{$\fs$}{s}-Schemes}
\label{section: s-schemes}
%=======================================================

Let $\fs=(\Aff,\cT)$ be a site. An $\fs$-scheme is a certain type of colimit of representable sheaves $h_Y=\Hom(-,Y)$ on $\Aff$. For simplicity, we assume that $\Aff$ is complete and cocomplete.

%=======================================================
\subsection{Principal opens}
\label{sub:PrincipalOpens}
%=======================================================

A \emph{principal open} is a morphism in $\Aff$ that appears in a covering family of $\cT$. We denote the class of all principal opens by $\cP=\cP_\cT$. The axioms of a Grothendieck pretopology imply the following properties for $\cP$:
\begin{enumerate}[label=(P\arabic*)]
 \item \label{PO1} all isomorphisms are in~$\cP$;
 \item \label{PO2} $\cP$ is closed under composition;% if both $Z \to Y$ and $Y \to X$ are in $\cP$, then  $Z \to Y \to X$ is in $\cP$.
 \item \label{PO3} $\cP$ is stable under base change: if $f: U \rightarrow X$ is in $\cP$ and $g: Y \rightarrow X$ is any morphism in $\Aff$, then the canonical projection $U \times_{X}Y \rightarrow Y$ is in $\cP$.
\end{enumerate}
Conversely, given a class of morphisms $\cP$, we define $\gen{\cP}_\can$ as the finest subcanonical Grothendieck topology for which all principal opens are in~$\cP$. The relation between $\cP$ and $\cT$ is as follows.

\begin{lemma}
 Let $\cP$ be a class of morphisms and $\cT=\gen{\cP}_\can$. Then $\cP_\cT\subset\cP$, with equality if and only if $\cP$ satisfies properties \ref{PO1}--\ref{PO3}. Conversely, a Grothendieck pretopology $\cT$ is contained in $\cT'=\gen{\cP_\cT}_\can$ and $\cP_{\cT'}=\cP_\cT$.
\end{lemma}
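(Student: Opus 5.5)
Throughout I read $\gen{\cP}_\can$ as the paper defines it: the finest subcanonical Grothendieck pretopology all of whose covering families consist of morphisms in $\cP$. I take its existence for granted, as the paper does. The only property of it that I will use is this: every subcanonical pretopology whose covering families consist of morphisms in $\cP$ is contained in $\gen{\cP}_\can$.

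\emph{First part.} The inclusion $\cP_\cT\subset\cP$ is immediate. A morphism in $\cP_\cT$ is a member of some covering family of $\cT=\gen{\cP}_\can$, and by construction all such members lie in $\cP$.

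Suppose $\cP_\cT=\cP$. The properties \ref{PO1}--\ref{PO3} hold for the class of principal opens of any pretopology, as observed just before the lemma, so $\cP$ satisfies them.

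Conversely, suppose $\cP$ satisfies \ref{PO1}--\ref{PO3}. The key step is to exhibit, for every $f\colon U\to X$ in $\cP$, a covering family of $\gen{\cP}_\can$ that contains $f$. I would use the auxiliary collection $\cT_0$ of all families $\{\iota_i\colon U_i\to X\}$ with every $\iota_i\in\cP$ and at least one $\iota_{i_0}$ an isomorphism. I would check the following.
\begin{enumerate}
 \item $\cT_0$ is a pretopology. It contains all isomorphisms by \ref{PO1}. It satisfies local character by \ref{PO2}, because composing the iso members of the two levels gives an iso. It satisfies base change by \ref{PO3}, because the base change of an isomorphism is an isomorphism.
 \item $\cT_0$ is subcanonical. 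Given compatible sections $(f_i)$ of $\Hom(-,Y)$ on such a family, set $g=f_{i_0}\circ\iota_{i_0}^{-1}$ on $X$. Compatibility on $U_i\times_X U_{i_0}\cong U_i$ forces $f_i=g\circ\iota_i$, so $g$ is the unique gluing.
\end{enumerate}
Maximality of $\gen{\cP}_\can$ then gives $\cT_0\subset\gen{\cP}_\can$. Since $\{f,\mathrm{id}_X\}\in\cT_0$, every $f\in\cP$ is a principal open of $\cT$. Hence $\cP\subset\cP_\cT$, and therefore $\cP=\cP_\cT$.

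\emph{Second part.} Let $\cT$ be a (subcanonical) pretopology and $\cT'=\gen{\cP_\cT}_\can$. Every covering family of $\cT$ consists of morphisms of $\cP_\cT$ by definition, and $\cT$ is subcanonical. Maximality therefore gives $\cT\subset\cT'$. This yields $\cP_\cT\subset\cP_{\cT'}$, and the first part applied to $\cP=\cP_\cT$ yields $\cP_{\cT'}\subset\cP_\cT$. In fact, since $\cP_\cT$ satisfies \ref{PO1}--\ref{PO3}, the first part gives equality directly.

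The main obstacle is the step that forces every $f\in\cP$ into some covering family. One cannot in general cover $X$ by $f$ alone, and subcanonicity of an arbitrary family is hard to verify. Adjoining an isomorphism to the family sidesteps both issues, and I expect the padding trick above with $\{f,\mathrm{id}_X\}$ to do the job. A secondary concern is the existence of the ``finest'' pretopology. If needed, I would simply work with the maximality property stated at the start, as the paper implicitly does.
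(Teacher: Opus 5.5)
Your proof is correct. The paper states this lemma without proof, so there is no argument to compare against. Your padding device $\{f,\mathrm{id}_X\}$ is the same construction the paper uses in \autoref{ex: topology that is not subcanonically saturated} and for simplicial complexes: families of principal opens that contain an isomorphism. Your subcanonicity check is exactly what is needed. It uses $U_i\times_X U_{i_0}\cong U_i$ for existence of the gluing, and uniqueness follows because any $g'$ with $g'\circ\iota_{i_0}=f_{i_0}$ must equal $f_{i_0}\circ\iota_{i_0}^{-1}$.

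Two small remarks sharpen the statement; neither is needed to repair your argument.
\begin{enumerate}
 \item If $\gen{\cP}_\can$ exists at all, then $\cP$ already contains every isomorphism, because every pretopology contains the families $\{\varphi\}$ with $\varphi$ an isomorphism. So the equivalence in the first part really concerns only \ref{PO2} and \ref{PO3}.
 \item Under \ref{PO1}--\ref{PO3}, the existence you take for granted can be justified. Let $\gen{\cP}_\can$ consist of all families of morphisms in $\cP$ all of whose base changes are subcanonical. Base changes and composites of such families stay in $\cP$ by \ref{PO3} and \ref{PO2}. Closure under local character follows from the standard argument that universally effective-epimorphic families form a topology.
\end{enumerate}

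As you note, the inclusion $\cT\subset\cT'$ in the second part relies on the paper's standing assumption that $\cT$ is subcanonical.
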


We call a Grothendieck pretopology $\cT$ \emph{subcanonically saturated} if $\cT=\gen{\cP_\cT}_\can$. Most interesting Grothendieck pretopologies are subcanonically saturated. For exceptions, see \autoref{ex: topology that is not subcanonically saturated} and \autoref{rem: subcanonically saturated Grothendieck topology for finite sets}.

\begin{rem}
 The term ``principal opens'' refers to the fact that in scheme theory and its generalizations to semirings (cf.\ \autoref{section: semiring schemes}) and other algebraic structures, it is easier to work with principal open immersions than with arbitrary open immersions. 
 
 This is already visible in the theory of usual schemes: in contrast to principal open immersions, which correspond to localizations of rings at a single element, a purely algebraic characterization of open immersions of affine schemes in terms of their coordinate algebras is a difficult challenge; also cf.\ \autoref{ex: principal Zariski topology}.

 It is a more profound problem for categories $\Aff$ for which it is not clear \emph{a priori} what the underlying topological space of an object would be; see \autoref{section: PointsInSSchemes} for more details.
\end{rem}

\begin{ex}\label{ex: topology that is not subcanonically saturated}
 Let $\Aff=\Rings^\op$ be the category of (usual) affine schemes and $\cT$ the Grothendieck pretopology that consists of all covering families $\{U_i\to X\}$ of principal open immersions for which $U_i\to X$ is an isomorphism for some $i$. Then $\cP_\cT$ is the class of all principal open immersions and $\gen{\cP_\cT}_\can$ consists of all Zariski coverings $\{U_i\to X\}$ by principal open immersions, which do not need to contain an isomorphism in general. Thus the inclusion $\cT\subset \gen{\cP_\cT}_\can$ is not an equality, i.e., $\cT$ is not subcanonically saturated.
\end{ex}

%=======================================================
\subsubsection{Principal opens for the Euclidean topology}
\label{subsubsection: principal opens for the Euclidean topology}
%=======================================================

Let $\Aff=\Aff_{\cC^\infty}$ be the category of open subsets of Euclidean space together with smooth maps, equipped with the Euclidean topology $\cT$ from \autoref{ex: Euclidean topology}, which consists of jointly surjective families $\{U_i\to X\}$ of smooth open topological embeddings. In this case, $\cP=\cP_\cT$ is the class of smooth open topological embeddings, and $\cT$ is subcanonically generated, as can be seen as follows.

The Euclidean topology $\cT$ is subcanonical: the functor $\Hom(-,Y)$ satisfies the sheaf axiom with respect to covering families $\{U_i\to X\}$ in $\cT$ because a smooth map $\varphi:X\to Y$ corresponds a family of smooth maps $\varphi_i:U_i\to Y$ that coincide pairwise on the ``intersection'' $U_i\times_X U_j$.

On the other hand, let $\cS=\{\iota_i:U_i\to X\}$ be a subcanonical family of smooth open topological embeddings. We claim that $\cS$ is in $\cT$, i.e., $\cS$ is jointly surjective. Indded, if this were not the case, i.e., if there was a point $x\in X-\bigcup\im\iota_i$, then also the base change $\{U_i\times_X\{x\}\to \{x\}\}$ would be subcanonical (using the base change property of the canonical topology). 
Since $x\notin\im\iota_i$, we have $U_i\times_X\{x\}=\emptyset$ for all $i$. 
Thus, for any $Y$ in $\Aff$, there is a unique family of morphisms $U_i\times_X\{x\}\to Y$ that coincides pairwise on the (empty) intersections, but there is in general more than one smooth map~$\{x\}\to Y$. 
This is the desired contradiction and shows that $\cS$ is in $\cT$.

%=======================================================
\subsubsection{Principal opens for the Zariski topology}
\label{subsubsection: principal opens for the Zariski topology}
%=======================================================

Let $\Aff=\Aff_\Z$ be the category of affine schemes, equipped with the Zariski topology $\cT$ from \autoref{ex: Zariski topology}, which consists of jointly surjective families $\{U_i\to X\}$ of open immersions. In this case, $\cP=\cP_\cT$ is the class of open immersions, and $\cT$ is subcanonically generated, which can be justified in a similar way as for the Euclidean topology; see \autoref{subsubsection: principal opens for the Euclidean topology}. In particular, the proof that the Euclidean topology is subcanonical carries over \emph{verbatim} to the Zariski topology.

The proof that every subcanonical family $\cS=\{U_i\to X\}$ of open immersions is in $\cT$ requires a slight adaptation. Assume that this was not the case, i.e., there was a point $x\in X-\bigcup\im\iota_i$. Then let $Z=\Spec k(x)$ be the spectrum of the residue field of $x$ and $Z\to X$ the canonical inclusion, which has image $x$. Pulling back the family $\cS$ to $Z$ yields the subcanonical family $\{U_i\times_XZ\to Z\}$, and $U_i\times_XZ$ is the empty scheme for all~$i$. This leads to the desired contradiction of the sheaf axiom for a suitably chosen affine scheme $Y$ (e.g. for $Y=Z\sqcup Z$).

%=======================================================
\subsection{\texorpdfstring{$\fs$}{s}-Presentations}
%=======================================================

Let $\fs=(\Aff,\cT)$ be a site, $\cP$ the class of its principal opens and $\Sh_\fs$ the category of sheaves on $\fs$. An \emph{$\fs$-presentation} is a certain type of diagram $\cU$ in $\Aff$ that characterizes an $\fs$-scheme $X$ as its colimit in $\Sh_\fs$. We introduce all required notions in the following.

A \emph{diagram in $\Aff$} is a functor $\cU:\cI\to\Aff$ from an ``index'' category $\cI$ to $\Aff$. We typically write $U_i=\cU(i)$ for the image of objects $i$ of $\cI$. We say that $\cU$ is a diagram \emph{of principal opens} if all morphisms $\cU(i\to j)$ are in~$\cP$. A diagram $\cU$ is \emph{monodromy free} if the canonical inclusion $U_i\to\colim_{\Sh_\fs}\cU$ is a monomorphism of sheaves on $\fs$ for every $i$ in $\cI$.

\begin{df}
 An \emph{$\fs$-presentation} is a monodromy free diagram $\cU$ of principal opens in $\Aff$.
\end{df}

\begin{rem}\label{rem: intrinsic characterization of monodromy free diagrams}
 Monodromy-free diagrams can be characterized intrinsically \emph{without reference to $\Sh_\fs$}, which is important for certain proofs. Even though this intrinsic characterization does make an explicit appearance in this text, we include a description for the sake of its relevance.

 A \emph{path in $\cU$} is a subdiagram $\Pi$ of the form
 \[
  U_0 \ \stackrel{\alpha_1}\longleftrightarrow \ U_1 \ \stackrel{\alpha_2}\longleftrightarrow \ \dotsc \ \stackrel{\alpha_n}\longleftrightarrow \ U_n,
 \]
 where $ \xleftrightarrow[]{\alpha_i}$ indicates that the arrow is allowed to assume either direction. The path $\Pi$ is \emph{closed} if $U_0 = U_n$, i.e. $0=n$ as objects of $\cI$. If $\Pi$ is closed, we denote by $\widehat\Pi$ the corresponding open path for which $0\neq n$, i.e., $U_0$ and $U_n$ are considered as distinct objects of the open path $\widehat\Pi$. Let
 \[
  \mu_\Pi: \ \lim\Pi \ \longrightarrow \ \lim\widehat\Pi
 \]
 be the morphism that is induced by the identity maps $U_i\to U_i$. 
  
 The diagram $\cU$ is \emph{monodromy free} if and only if $\mu_\Pi$ is an isomorphism for all closed paths $\Pi$ in $\cU$.
\end{rem}

\begin{ex}[Atlas of a covering family]\label{ex: atlas}
 The prototype of a monodromy free $\fs$-presentation stems from a family $\cS=\{U_i\to X\}$ of principal opens, assuming that all morphisms $U_i\to X$ are monomorphisms. We define the \emph{atlas of $\cS$} as the diagram $\cU_\cS$ that consists of the $U_i$ and all pairwise fibre products $U_{ij}=U_i\times_XU_j$ together with the canonical projections $U_{ij}\to U_i$ and $U_{ij}\to U_j$.

 The diagram $\cU_\cS$ consists of principal open immersions since the projections $U_{ij}\to U_i$ are the base change of the principal open $U_j\to X$ along $U_i\to X$. It is monodromy free by the alternative characterization of this property from \autoref{rem: intrinsic characterization of monodromy free diagrams}.
\end{ex}

Note that if $\cS=\{U_i\to X\}$ is a covering family in $\cT$, then the sheaf axiom for $\cS$ implies that the canonical morphism $\colim_{\Sh_\fs}\cU_\cS \to X$ is an isomorphism of sheaves on $\fs$. Thus $\fs$-presentations provide an adequate substitute for covering families, which leads to the following notion of an $\fs$-scheme.

\begin{df}
 An \emph{affine $\fs$-scheme} is a representable sheaf on $\fs$, i.e., it is isomorphic to $h_X=\Hom(-,X)$ for some $X$ in $\Aff$. An $\fs$-scheme is a sheaf $X$ on $\fs$ that is isomorphic to the colimit $\colim_{\Sh_\fs}\cU$ of a monodromy free $\fs$-presentation. We define $\Sch_\fs$ as the full subcategory of $\Sh_\fs$ that consists of all $\fs$-schemes.
\end{df}

\begin{rem}
 This definition of $\fs$-presentation deviates from that in \cite{toolkit}, where $\fs$-presentations are required to be closed under limits of finite connected subdiagrams but are in general not required to be monodromy free. This is relevant if one aims to cover a generalization of algebraic spaces and similar objects. Since we do not touch such generalizations in the present text, we can allow ourselves to work with the present simpler notion of $\fs$-presentations.
\end{rem}

%=======================================================
\subsection{Morphisms of \texorpdfstring{$\fs$}{s}-schemes}
\label{sub:MorphismOfSchemes}
%=======================================================

In this section, we explain in which sense a morphism of $\fs$-schemes is induced by a morphism of $\fs$-presentations.

\begin{df}
 Let $\mathcal U:\cI_\cU\to\Aff$ and $\mathcal V:\cI_\cV\to\Aff$ be $\fs$-presentations. 
 A \emph{morphism of $\fs$-presentations $\Phi: \mathcal V \rightarrow \mathcal U$} is a functor $\underline \Phi: \mathcal I_{\mathcal V} \rightarrow \mathcal I_{\mathcal U}$ together with a natural transformation $\hat{\Phi}: \mathcal V \rightarrow \mathcal U\circ \underline \Phi$. 
 Let $\Pres_\fs$ be the category of $\fs$-presentations.
\end{df}

A morphism $\Phi:\cV\to\cU$ of $\fs$-presentations induces a morphism between their colimits in $\Sh_\fs$. The Yoneda embedding $\Aff\to\Sh_\fs$ factorizes into
\[
 \Aff \ \longrightarrow \ \Pres_\fs \ \stackrel{\colim}\longrightarrow \ \Sch_\fs \ \longrightarrow \ \Sh_\fs,
\]
where the first functor sends $X$ to the $\fs$-presentation $\cU$ that consists of a single object~$X$. 
The first and the third functors are fully faithful. 
The second functor is not, but it is ``jointly full'' in the following sense.

\begin{thm}[{\cite{toolkit}}]\label{thm: morphisms of s-schemes are affinely presented}
 Let $\varphi:X\to Y$ be a morphism of $\fs$-schemes. Then there exists a morphism $\Phi:\cU\to\cV$ of $\fs$-presentations and isomorphisms $X\simeq\colim_{\Sh_\fs}\cU$ and $Y\simeq\colim_{\Sh_\fs}\cV$ such that the diagram
 \[
  \begin{tikzcd}[column sep=60, row sep=20]
   X \ar[r,"\varphi"] \ar[d,"\simeq"] & Y \ar[d,"\simeq"] \\
   \colim_{\Sh_\fs}\cU \ar[r,"\colim\Phi"] & \colim_{\Sh_\fs}\cV
  \end{tikzcd}
 \]
 commutes.
\end{thm}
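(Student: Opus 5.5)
Given $\varphi:X\to Y$, the plan is to start from arbitrary presentations and refine the one of $X$ until each of its pieces maps into a single piece of the presentation of $Y$. Choose $\fs$-presentations $\cV:\cI_\cV\to\Aff$ with $Y\simeq\colim_{\Sh_\fs}\cV$ and $\cU_0:\cI_0\to\Aff$ with $X\simeq\colim_{\Sh_\fs}\cU_0$. Because $\cV$ is monodromy free, each $V_j\to Y$ is a monomorphism of sheaves; I would think of the $V_j$ as ``open subfunctors'' of $Y$. For every $i\in\cI_0$, the composite $h_{U_i}\to X\to Y$ corresponds by Yoneda to a point of $Y(U_i)$.

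\textbf{Step 1: local lifting.} Since $Y$ is the sheaf colimit of the $V_j$, this point lifts to some $V_j$ only locally. Concretely, $\colim_{\Sh}$ is the sheafification of $\colim_{\PrSh}$, and one sheafification step suffices to obtain local sections. So there is a covering family $\{W_{i,k}\to U_i\}$ in $\cT$ such that each restriction lies in the image of $V_{j(i,k)}(W_{i,k})$, for some index $j(i,k)$ and some morphism $\psi_{i,k}:W_{i,k}\to V_{j(i,k)}$. Each $W_{i,k}\to U_i$ is a principal open, and hence so is each composite into the $U_i$, by \ref{PO2}.

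\textbf{Step 2: building the refined presentation $\cU$.} The new index category has as objects the pairs $(i,k)$ together with the fibre products needed to glue them. Above each arrow $i\to i'$ of $\cI_0$ and each pair $(k,k')$, we take $W_{i,k}\times_{U_{i'}}W_{i',k'}$, which is a principal open by \ref{PO3}, and similarly the pairwise overlaps $W_{i,k}\times_{U_i}W_{i,k'}$. These come with their projections, exactly as in the atlas construction of \autoref{ex: atlas}. I would then check two things. First, $\colim_{\Sh}\cU\simeq X$: the sheaf axiom for each cover gives $\colim$ over the atlas of $\{W_{i,k}\to U_i\}$ isomorphic to $U_i$, and colimits commute with colimits. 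Second, $\cU$ is monodromy free: each $W\to\colim\cU\simeq X$ factors as $W\to U_i\to X$, a composite of monomorphisms, given that principal opens in a covering family of a subcanonical site are monomorphisms in the relevant cases, or else by passing to the image. The overlaps require the assigned indices $j$ to be compatible, so I would restrict each overlap object to a single chosen index $j$ and lift its map to $V_j$.

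\textbf{Step 3: the functor and the transformation.} On objects, $\underline\Phi$ sends $(i,k)\mapsto j(i,k)$. An overlap object maps into two pieces $V_j$ and $V_{j'}$, and its two lifts need not agree inside a single $V$ unless $\cI_\cV$ contains a common refinement. I regard this as the main obstacle. To fix it, I would first replace $\cV$ by its closure under pairwise fibre products $V_j\times_Y V_{j'}$. Because $\cV$ is monodromy free, each $V_j\to Y$ is a monomorphism, so $V_j\times_YV_{j'}$ is a subsheaf. It is covered by representables, since it is the colimit of the limits of paths between $j$ and $j'$ (\autoref{rem: intrinsic characterization of monodromy free diagrams}), and after refining it becomes a principal open of $V_j$. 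With this closure in place, overlap objects map to the fibre-product objects, the morphisms of $\cU$ map to the corresponding projections, and naturality of $\hat\Phi$ follows from the fact that $V_j\to Y$ is a monomorphism. Uniqueness of factorizations through monomorphisms then gives commutativity of the square.
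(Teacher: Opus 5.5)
Your overall strategy matches the idea sketched in the paper: refine the presentation of $X$ via the preimages of the $V_j$ until every piece maps into a single $V_j$. Your fibre products over arrows $i\to i'$ are exactly what keeps the refined colimit equal to $X$. Enlarging $\cV$ is legitimate, since the statement lets you choose $\cV$. The gap is in Step 3.

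The sheaf $V_j\times_Y V_{j'}$ is a subsheaf of $h_{V_j}$ that is in general not representable. For the plane with doubled origin it is $\A^2-\{0\}$. So it cannot be adjoined to a diagram in $\Aff$, and no refinement makes it ``a principal open of $V_j$''. You could adjoin representable pieces instead. The natural choice is the limits $\lim\Pi$ of paths $\Pi$ from $j$ to $j'$, with their projections to $V_j$ and $V_{j'}$. These are principal monomorphisms, because every arrow of the monodromy free diagram $\cV$ is monic, and adjoining them changes neither $\colim_{\Sh_\fs}\cV$ nor monodromy freeness. But then an overlap object $O$ of $\cU$ maps into the union of these pieces and generally into none of them individually. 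So $\underline\Phi(O)$ is still undefined, and refining $O$ naively creates new overlaps with the same defect.

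The missing step is a second refinement of the overlap objects only, with no overlaps of overlaps added. Suppose $O$ has arrows to $W_a$ and $W_b$ whose lifts land in $V_j$ and $V_{j'}$. Replace $O$ by the family $O_\Pi=O\times_{V_j}\lim\Pi$, each member carrying the two composite arrows to $W_a$ and $W_b$.
\begin{itemize}
\item Because $V_{j'}\to Y$ is a monomorphism, the projection $O_\Pi\to\lim\Pi$ is compatible with both lifts. So sending $O_\Pi$ to the new index of $\lim\Pi$ defines $\underline\Phi$, and naturality of $\hat\Phi$ holds.
\item $O_\Pi\to O$ is a base change of the principal monomorphism $\lim\Pi\to V_j$, so it is again a principal monomorphism.
\item Sections of $\colim_{\PrSh_{\Aff}}\cV$ with equal image in $Y$ agree locally, i.e.\ they are locally joined by zigzags. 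Hence some covering family of $O$ refines $\{O_\Pi\to O\}$.
\item By separatedness of sheaves, cocones on the new diagram are the same as cocones on $\cU$, so its colimit is still $X$.
\end{itemize}

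Separately, your monodromy-freeness argument in Step 2 needs the covering maps $W_{i,k}\to U_i$ from Step 1 to be monomorphisms. This does not follow from subcanonicity: \'etale coverings are subcanonical but not monic. ``Passing to the image'' does not help, since the image is not an object of $\Aff$. So you must impose this as a hypothesis. It holds in all examples of the paper and is implicit in its atlas construction.
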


\begin{proof}[Idea of proof]
 The proof is based on the notion of a refinement of $\fs$-presentations, which is a certain type of morphism $\Phi:\cW\to\cU$ that induces an isomorphism between the colimits of $\cW$ and $\cU$ in $\Sh_\fs$. 
 Refinements are put to work in the following way: one begins with an arbitrary choice of $\fs$-presentations $\cU$ and $\cV$ whose respective colimits in $\Sh_\fs$ are $X$ and $Y$. 
 Then one uses the inverse image of the affine $\fs$-schemes $\cV(j)$ under $\varphi:X\to Y$ to construct a suitable refinement $\cW\to\cU$ that allows for a morphism $\Phi:\cW\to \cV$ that represents $\varphi:X\to Y$.
\end{proof}

%=======================================================
\subsection{Smooth manifolds as \texorpdfstring{$\fs$}{s}-schemes}
\label{subsubsection: smooth manifolds as s-schemes}
%=======================================================

Let $\Aff=\Aff_{\cC^\infty}$ be the category of open subsets of Euclidean space and smooth maps, equipped with the Euclidean topology $\cT$, which defines the site $\fs=(\Aff,\cT)$. Let $\Mf_{\cC^\infty}$ be the category of smooth manifolds.

The Yoneda embedding identifies $\Aff$ with its essential image in $\Sh_\fs$, the category of affine $\fs$-schemes. This extends to an embedding $\yo:\Mf_{\cC^\infty}\to\Sh_\fs$ by sending a smooth manifold $X$ to the sheaf $\Hom(-,X)$ on $\fs$.

Since manifolds are by definition second countable, we need to make an analogous adaptation to $\Sch_\fs$. We define $\Sch_\fs^{\aleph_0}$ as the full subcategory of $\Sch_\fs$ that consists of colimits of countable $\fs$-presentations $\cU:\cI\to\Aff$ (i.e., the class of objects of $\cI$ is countable if not finite).

\begin{prop}\label{prop: smooth manifolds as s-schemes}
 The embedding $\yo:\Mf_{\cC^\infty}\to\Sh_\fs^{\aleph_0}$ has essential image $\Sch^{\aleph_0}_\fs$, and the restriction $\Mf_{\cC^\infty}\to\Sch_\fs^{\aleph_0}$ is an equivalence of categories.
\end{prop}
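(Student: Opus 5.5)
The plan is to prove three things: $\yo$ is fully faithful on $\Mf_{\cC^\infty}$, its image lies in $\Sch^{\aleph_0}_\fs$, and every object of $\Sch^{\aleph_0}_\fs$ is isomorphic to some $\yo_X$. The equivalence then follows formally.

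\emph{Full faithfulness.} A natural transformation $\Hom(-,X)\to\Hom(-,X')$ gives a map on points by evaluating at the one-point space $\R^0\in\Aff$. Evaluating at the chart domains $U_i$ shows that this map is smooth in charts. Conversely, a smooth map is determined by its compositions with charts, which gives faithfulness. Fullness is the same argument read backwards, using that the charts cover $X$.

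\emph{Image of $\yo$.} Let $X$ be a manifold with a countable atlas $\{\varphi_i:U_i\to X\}$. The overlaps $\varphi_i^{-1}(\varphi_j(U_j))$ are open in $U_i$, hence objects of $\Aff$. I take the analogue of the atlas diagram of \autoref{ex: atlas}: the objects are the $U_i$ and the overlaps $U_{ij}$, and the morphisms are the smooth open embeddings $U_{ij}\to U_i$ and $U_{ij}\to U_j$ (the latter via the transition map). This diagram is countable and consists of principal opens by \autoref{subsubsection: principal opens for the Euclidean topology}. Next I show that the canonical map $\colim_{\Sh_\fs}\cU\to\yo_X$ is an isomorphism. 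Since $\yo_X$ is a sheaf, it suffices to check that for every $T\in\Aff$, every smooth $T\to X$ locally factors through some chart, and that any two such factorizations agree on overlaps. Both statements hold because $\{T\times_X U_i\to T\}$ is a Euclidean covering of $T$ and the $\varphi_i$ are injective. Once this isomorphism is established, monodromy freeness is immediate: each $h_{U_i}\to h_X$ is a monomorphism because $\varphi_i$ is injective.

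\emph{Essential surjectivity.} Let $\cU:\cI\to\Aff$ be a countable $\fs$-presentation. I glue topologically: let $\underline X$ be the colimit of the $U_i$ in $\TopSp$. Since all transition maps are open embeddings, each $U_i\to\underline X$ is continuous and open. Monodromy freeness, through the path criterion of \autoref{rem: intrinsic characterization of monodromy free diagrams}, should give injectivity of $U_i\to\underline X$: two points of $U_i$ identified along a closed path of open embeddings would contradict $\mu_\Pi$ being an isomorphism (test this on $\R^0$-valued points). This gives charts whose transition maps are composites of the structure maps, hence smooth. Second countability follows from countability of $\cI$, and dimension is locally constant. Finally, the argument of the previous step shows that $\yo_{\underline X}\simeq\colim_{\Sh_\fs}\cU$, because $\R^0$-points and local factorizations of $T$-points through the $U_i$ are computed in the same way on both sides.

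The main obstacle is this last construction. Injectivity of the charts must be extracted from monodromy freeness, which needs a careful description of the sheaf colimit on points, essentially that $(\colim\cU)(\R^0)$ is the set-theoretic colimit. More seriously, Hausdorffness is \emph{not} automatic: the line with two origins is the colimit of the monodromy-free presentation $\R\leftarrow\R\setminus\{0\}\to\R$. I would therefore first check which convention of $\Mf_{\cC^\infty}$ is intended. If Hausdorffness is required, the statement needs the additional condition that the maps $U_{i}\times_{\underline X}U_j\to U_i\times U_j$ be closed, or an adjusted definition of $\Sch^{\aleph_0}_\fs$. I would attempt to formulate that condition intrinsically in terms of the presentation and verify that the rest of the argument is unaffected.
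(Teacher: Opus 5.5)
Your proof is correct under the convention discussed at the end, and it takes a different route from the paper's outline in two places.

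\textbf{Full faithfulness.} The paper proves fullness by invoking \autoref{thm: morphisms of s-schemes are affinely presented}: it writes a morphism $\yo_X\to\yo_Y$ as the colimit of a morphism of $\fs$-presentations and then takes that colimit in $\Mf_{\cC^\infty}$. It proves faithfulness by refining atlases until both maps restrict to maps between charts. You instead evaluate at $\R^0$ and use naturality along points $\R^0\to U_i$. This shows that $\eta_{U_i}(\varphi_i)$ is the composite of $\varphi_i$ with the map induced on $\R^0$-points. Your argument is shorter and self-contained. However, it rests on the special fact that $\R^0$-valued points detect morphisms, which has no analogue for, e.g., semiring schemes. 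The paper's route is the template it reuses in those settings.

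\textbf{Monodromy freeness and essential surjectivity.} For the image of $\yo$, the paper checks monodromy freeness via the path criterion of \autoref{rem: intrinsic characterization of monodromy free diagrams}. You deduce it from the isomorphism $\colim_{\Sh_\fs}\cU\simeq\yo_X$ and the injectivity of $\varphi_i$ on the $U_i$ and on the $U_{ij}$, which is cleaner. For essential surjectivity you follow the same gluing idea as the paper. You also make explicit the input that the paper hides in ``monodromy freeness implies the colimit is a manifold'', namely injectivity of the charts.

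\textbf{The obstacle you flag.} It is easier than you fear and needs no path criterion. Every covering family of $\R^0$ contains an isomorphism, so every covering sieve on $\R^0$ is maximal. Hence evaluation at $\R^0$ commutes with sheafification, and $(\colim_{\Sh_\fs}\cU)(\R^0)=\colim_{\Sets}U_i$ is the underlying set of $\underline X$. Since a monomorphism of sheaves is injective on sections, each $U_i\to\underline X$ is injective.

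\textbf{A precision about transition maps.} Transition maps are only \emph{locally} composites of structure maps \emph{and their inverses}. So you need principal opens to be diffeomorphisms onto open images. This is the only reading under which the Euclidean topology is subcanonical: the one-element cover $\{x\mapsto x^3\}$ of $\R$ is not subcanonical.

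\textbf{The Hausdorff issue.} Your objection is correct. The line with two origins is the colimit of a countable monodromy free presentation and is not $\yo$ of any Hausdorff manifold. The paper's outline never addresses this. The statement holds under the convention of the paper's introduction, where a manifold is a second countable space with a smooth atlas and Hausdorffness is not imposed. Since $\R\sqcup\R^2$ is also such a colimit, components of different dimensions must be allowed as well, which matches your ``dimension is locally constant''. With that convention your argument goes through as written.
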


\begin{proof}[Proof outline]
 Let $X$ be a smooth manifold. Then $X$ has a countable open cover $\{\iota_i:U_i\to X\}_{i\in \cI}$ by open subsets $U_i$ of Euclidean space such that all $\iota_i$ are smooth. This means that the pairwise intersections $U_{ij}=U_{i}\times_XU_{j}$ are in~$\Aff$ and that the canonical projections $U_{ij}\to U_i$ are smooth open topological embeddings. The collection of all $U_i$ and $U_{ij}$ and all canonical projections $U_{ij}\to U_i$ is a diagram $\cU$ of principal opens. The proof that $\cU$ is monodromy free, and thus an $\fs$-presentation, relies on the intrinsic characterization of monodromy freeness from \autoref{rem: intrinsic characterization of monodromy free diagrams}.

 By the local nature of a sheaf on $\fs$, the sheaf $\Hom(-,X)$ is canonically isomorphic to $\colim_{\Sh_\fs} \cU$, which shows that $\Hom(-,X)$ is indeed an $\fs$-scheme and that the image of $\Mf_{\cC^\infty}\to\Sh_\fs$ is contained in $\Sch_\fs^{\aleph_0}$.

 Conversely, every $\fs$-scheme in $\Sch^{\aleph_0}_\fs$ is by definition the colimit of a countable $\fs$-presentation $\cU$. The monodromy freeness of $\cU$ implies that the colimit of $\cU$ in $\Mf_{\cC^\infty}$ is a smooth manifold $X$ and that the family $\{U_i\to X\}$ is an atlas for $X$. In conclusion, this shows that $\Sch_\fs$ is the essential image of $\yo:\Mf_{\cC^\infty}\to\Sh_\fs$.

 The proof that $\yo$ is fully faithful proceeds along the following arguments. Let $X$ and $Y$ be smooth manifolds with corresponding $\fs$-schemes $\yo_X$ and $\yo_Y$, and let $\varphi:\yo_X\to \yo_Y$ be a morphism of $\fs$-schemes. By \autoref{thm: morphisms of s-schemes are affinely presented}, $\varphi$ is the colimit of a morphism $\Phi:\cU\to\cV$ of $\fs$-presentations $\cU$ and $\cV$ with respective colimits $\yo_X$ and $\yo_Y$ in $\Sh_\fs$. Since the colimits of $\cU$ and $\cV$ in $\Mf_{\cC^\infty}$ are $X$ and $Y$, respectively, the colimit of $\Phi$ in $\Mf_{\cC^\infty}$ yields a smooth map $\psi:X\to Y$ with image $\yo(\psi)=\varphi$. Thus $\yo$ is full.

 Next we consider two morphisms $\varphi,\psi:X\to Y$ with $\yo (\varphi)=\yo (\psi)$. Choose smooth open coverings $\{U_i\to X\}$ and $\{V_j\to Y\}$ by open subsets $U_i$ and $V_j$ of Euclidean space. After a suitable refinement of $\{U_i\to X\}$, we can assume that $\varphi$ restricts to maps $\varphi:U_i\to V_{\underline\Phi(i)}$ and that $\psi$ restricts to maps $\psi_i:U_i\to V_{\underline\Psi(i)}$ for suitable maps $\underline\Phi$ and $\Psi$ between the indices of the $U_i$ and the $V_j$.

 Let $\cU$ be the $\fs$-presentation that corresponds to $\{U_i\to X\}$ and $\cV$ the $\fs$-presentation that corresponds to $\{V_j\to Y\}$, as constructed in the first paragraph of the proof. Then the maps $\varphi_i:U_i\to V_{\underline\Phi(i)}$ induce a morphism $\Phi:\cU\to\cV$ and the maps $\psi_i:U_i\to V_{\underline\Phi(i)}$ induce a morphism $\Psi:\cU\to\cV$. The colimits of $\Phi$ and $\Psi$ in $\Sh_\fs$ are equal to $\yo (\varphi)=\yo (\psi)$. Since the canonical inclusions $U_i\to X$ are monomorphisms, this means that $\varphi_i$ and $\psi_i$ agree as maps from $U_i$ to $V_{\Phi(i)}\cap V_{\Psi(i)}$. In consequence, $\Phi$ and $\Psi$ induce equal colimits $\varphi=\psi$ in $\Mf_{\cC^\infty}$, which shows that $\yo$ is faithful.
\end{proof}

%=======================================================
\subsection{Usual schemes as \texorpdfstring{$\fs$}{s}-schemes}
\label{subsubsection: usual schemes as s-schemes}
%=======================================================

Let $\Aff=\Aff_\Z$ be the category of affine schemes, equipped with the Zariski topology $\cT$ and $\fs=(\Aff,\cT)$. Let $\Sch_\Z$ be the usual category of schemes. Then the Yoneda embedding $\yo:\Aff\to\Sh_\fs$ extends to a functor $\yo:\Sch_\Z\to\Sh_\fs$, due to the local nature of scheme morphisms.

\begin{prop}\label{prop: schemes as s-schemes}
 The essential image of $\yo:\Sch_\Z\to\Sh_\fs$ is $\Sch_\fs$, and the restriction $\yo:\Sch_\Z\to\Sch_\fs$ is an equivalence of categories.
\end{prop}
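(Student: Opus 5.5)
We follow the outline of the proof of \autoref{prop: smooth manifolds as s-schemes}, replacing open subsets of Euclidean space by affine schemes and smooth open embeddings by open immersions. The argument has three parts: the image of $\yo$ lies in $\Sch_\fs$, every $\fs$-scheme arises this way, and $\yo$ is fully faithful.

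\emph{Image contained in $\Sch_\fs$.} Let $X$ be a scheme and choose an affine open cover $\{\iota_i:U_i\to X\}$.
\begin{enumerate}
 \item The intersections $U_{ij}=U_i\times_XU_j$ need not be affine, so we do not use the atlas of \autoref{ex: atlas} directly.
 \item Instead we build a diagram $\cU$ whose objects are the $U_i$ together with affine opens $W\subset U_i\cap U_j$ (for instance, principal opens of $U_i$ that are also principal in $U_j$). These $W$ cover each $U_{ij}$ and are closed under finite intersections inside $X$. The morphisms of $\cU$ are the inclusions.
 \item All of these inclusions are open immersions of affine schemes, hence lie in $\cP$.
 \item Each object of $\cU$ is an open subscheme of $X$. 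So for every closed path $\Pi$, both $\lim\Pi$ and $\lim\widehat\Pi$ compute the same intersection inside $X$. By \autoref{rem: intrinsic characterization of monodromy free diagrams}, $\cU$ is monodromy free.
 \item Since $h_X$ is a Zariski sheaf on $\Aff$ and scheme morphisms glue, $h_X$ is the colimit of $\cU$ in $\Sh_\fs$. Concretely, every $T\to X$ with $T$ affine is Zariski-locally a map into some $U_i$, and two such local maps agree on the overlaps, which are covered by the $W$'s.
\end{enumerate}

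\emph{Essential surjectivity.} Let $F=\colim_{\Sh_\fs}\cU$ for an $\fs$-presentation $\cU$.
\begin{enumerate}
 \item Glue the affine schemes $U_i$ in $\Sch_\Z$ along the open immersions of $\cU$. Concretely, form the coequalizer of locally ringed spaces, taking the colimit of the underlying topological spaces and the limit of the structure sheaves.
 \item Monodromy freeness makes each $U_i\to X$ injective and open. For $i\neq j$, the fibre product $U_i\times_X U_j$ is recovered as the colimit of the limits over paths from $i$ to $j$. Thus the cocycle condition of the classical gluing lemma holds, and the result is a scheme $X$ with open cover $\{U_i\to X\}$.
 \item Applying $\yo$ commutes with this colimit, by the same locality argument as in the previous part. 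Hence $h_X\simeq F$.
\end{enumerate}
The main obstacle is this step. We must translate the intrinsic criterion that $\mu_\Pi$ is an isomorphism into the statement that the topological gluing does not identify extra points and that the $U_i\to X$ are open immersions. We would first handle diagrams whose objects are closed under finite connected limits, and then reduce to that case by a refinement as in the proof idea of \autoref{thm: morphisms of s-schemes are affinely presented}.

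\emph{Full faithfulness.} A morphism $h_X\to h_Y$ is determined by its values on affine $T$, which are compatible under restriction.
\begin{enumerate}
 \item Evaluating on the affine opens $U_i$ of $X$ gives morphisms $U_i\to Y$ that agree on overlaps.
 \item These glue uniquely to a morphism $X\to Y$, because $\Hom(-,Y)$ is a Zariski sheaf on $\Sch_\Z$. This gives fullness.
 \item Faithfulness holds since a morphism of schemes is determined by its restrictions to an affine cover.
\end{enumerate}
Alternatively, one can mimic the presentation-level argument of \autoref{prop: smooth manifolds as s-schemes} using \autoref{thm: morphisms of s-schemes are affinely presented}. The direct argument is simpler here, because schemes satisfy Zariski descent.
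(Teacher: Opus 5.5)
Your proposal is correct and follows essentially the same route as the paper's outline. For the forward direction you take an affine cover with overlaps covered by affine opens; for essential surjectivity you glue the colimit as a locally ringed space using monodromy freeness; and in both you use locality of scheme morphisms to identify $h_X$ with the colimit. The only difference is that you prove full faithfulness directly by Zariski descent, which the paper simply quotes as a well-known fact.

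Two minor points. First, the refinement detour you plan for the gluing step is not needed. Points of $U_i$ identified in the topological colimit come from points of $\lim\widehat\Pi$ for closed paths $\Pi$ at $i$, and the underlying space of $\lim\widehat\Pi$ is the set-theoretic limit because all maps are open immersions. So $\mu_\Pi$ being an isomorphism directly gives injectivity and the cocycle condition. Second, your $W$'s are closed under intersections only within a fixed $U_i\cap U_j$, not globally when $X$ is non-separated; nothing in your argument uses this, so it does no harm.
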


\begin{proof}[Proof outline]
 It is a well-known fact that $\yo:\Sch_\Z\to\Sh_\fs$ is fully faithful (e.g. see \cite[Prop.\ IV-2]{eh06}). Thus it suffices to show that its essential image is $\Sch_\Z$. The proof is analogous to that of \autoref{prop: smooth manifolds as s-schemes} and as follows.

 We begin to show that the image of $\yo$ is contained in $\Sch_\fs$. Consider a scheme $X$ and choose an affine open covering $\{U_i\to X\}$. Then the pairwise intersections $U_{ij}=U_i\times_XU_j$ admit affine open coverings $\{U_{ijk}\to U_{ij}\}$. The union of all the open immersions $U_{ijk}\to U_i$ and $U_{ijk}\to U_j$ yields a diagram $\cU$, which is monodromy free since all canonical inclusions into $X$ are monomorphisms. By the local nature of morphisms of schemes, we find that $\yo_X=\colim_{\Sh_\fs}\cU$, which shows that $\yo_X$ is in $\Sch_\fs$.

 Conversely, every $\fs$-scheme $Z$ is isomorphic to the colimit of an $\fs$-presentation $\cU$. Since $\cU$ is monodromy free, its colimit $X$ as a topological space is the union of all affine schemes in $\cU$, and the structure of $X$ is determined by its restriction to the open subsets $U_i\subset X$, which turns $X$ into a scheme. Once again, the local nature of scheme morphisms implies that $Z=\yo_X$.
\end{proof}

%%%%%%%%%%%%%%%%%%%%%%%%%%%%%%%%%%%%%%%%%%%%%%%%%%%%%%%%%%%%%%%%%%%%%%%%%%%%%%%%%%%%%%%%%%%%%%%%%%%
%%%%%%%%%%%%%%%%%%%%%%%%%%%%%%%%%%%%%%%%%%%%%%%%%%%%%%%%%%%%%%%%%%%%%%%%%%%%%%%%%%%%%%%%%

%=======================================================
\section{Points for \texorpdfstring{$\fs$}{s}-schemes}
\label{section: PointsInSSchemes}
%=======================================================

In this section, we explain how the collection of open subschemes of an $\fs$-scheme determines a topological space as its Stone dual. In the cases of manifolds and schemes, the Stone dual agrees with the usual topological spaces.

%=======================================================
\subsection{Stone Duality}
\label{subsection: Stone duality}
%=======================================================

A \emph{locale} is a complete distributive lattice, which is a poset $\Lambda$ for which every subset $S\subset\Lambda$ has a largest lower bound $\bigwedge S$ (the \emph{meet of $S$}) and a least upper bound $\bigvee S$ (the \emph{join of $S$}) such that finite meets distribute over joins, i.e., $u\wedge(\bigvee S)=\bigvee_{v\in S} (u\wedge v)$. A locale $\Lambda$ has, in particular, a \emph{bottom}, or smallest element, $\0=\bigwedge \Lambda$ and a \emph{top}, or largest element, $\1=\bigvee\Lambda$. A \emph{morphism $f:\Lambda\to\Lambda'$ of locales} is a map $f^\ast:\Lambda'\to\Lambda$ in the opposite direction that preserves $\0$, $\1$, arbitrary joins and finite meets. We denote the category of locales by $\Loc$.

The prototype of a locale is the collection of open subsets of a topological space (where the meet of an arbitrary collection of open subsets is the open interior of their set-theoretic intersection), and a continuous map induces a morphism of locales. This defines a functor $(-)^\loc:\TopSp\to\Loc$ from topological spaces to locales.

For example, the locale of open subsets of the one point space $\bbone=\{\ast\}$ consists of the two open subsets $\0=\emptyset$ and $\1=\bbone$, i.e., the locale $\bbone^\loc$$=\{\0,\1\}$ is the Boolean lattice. Note that a topological space $X$ can be recovered as $\Hom_{\TopSp}(\bbone,X)$, equipped with the compact-open topology. 

The \emph{Stone dual} of a locale $\Lambda$ is the topological space $\Lambda^\top=\Hom_{\Loc}(\bbone^\loc,\Lambda)$, equipped with the topology that consists of open subsets of the form $\{f:\bbone^\loc\to\Lambda\mid f^\ast(u)$ $=\1\}$ with $u\in\Lambda$. The topology on $\Lambda^\top$ is functorial in $\Lambda$, which yields a functor $(-)^\top:\Loc\to\TopSp$.

\begin{thm}[Stone duality]
 The functor $(-)^\top:\Loc\to\TopSp$ is left adjoint to $(-)^\loc:\TopSp\to\Loc$, and this adjunction restricts to mutually inverse equivalences between the essential image of $(-)^\top$ (``sober'' topological spaces) and the essential image of $(-)^\loc$ (``spatial'' locales):
 \[
  \begin{tikzcd}[column sep=80,row sep=35]
   \TopSp \ar[r,"(-)^\loc","{\rotatebox[origin=c]{180}{$\perp$}}"',shift left=5pt] \ar[rd,->>,thin,start anchor=330,end anchor=150] & \Loc \ar[l,"(-)^\top",shift left=5pt] \ar[ld,->>,thin,crossing over,start anchor=210,end anchor=30] \\
   \big\{\text{sober topological spaces}\big\} \ar[r,"(-)^\loc","\simeq"',shift left=5pt] \ar[u,hook] & \big\{\text{spatial locales}\big\} \ar[l,"(-)^\top",shift left=5pt] \ar[u,hook]
  \end{tikzcd}
 \]
\end{thm}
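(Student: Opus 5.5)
We prove the theorem in three steps: first the adjunction, via an explicit unit and counit, and then the restriction to an equivalence by the standard fixed-point argument for adjunctions.

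\emph{Points and the topology.} A morphism $f:\bbone^\loc\to\Lambda$ is a map $f^\ast:\Lambda\to\{\0,\1\}$ preserving $\0$, $\1$, finite meets and arbitrary joins. It is therefore determined by the set $F_f=\{u\mid f^\ast(u)=\1\}$. This set is a completely prime filter: it contains $\1$, is upward closed, is closed under finite meets, and if $\bigvee S\in F_f$ then some $s\in S$ lies in $F_f$. For $u\in\Lambda$ write
\[
 D(u)=\{f\mid f^\ast(u)=\1\}.
\]
Since $f^\ast$ preserves finite meets and arbitrary joins, we have $D(u\wedge v)=D(u)\cap D(v)$ and $D(\bigvee S)=\bigcup_{s\in S} D(s)$. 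Hence the sets $D(u)$ form a topology on $\Lambda^\top$, and the assignment
\[
 \varepsilon_\Lambda:\Lambda\to(\Lambda^\top)^\loc,\qquad u\mapsto D(u),
\]
is a surjective morphism of frames. This is the counit, viewed as a locale map $(\Lambda^\top)^\loc\to\Lambda$.

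\emph{Unit and adjunction.} For a space $X$ and $x\in X$, evaluation at $x$ gives a point of $X^\loc$, namely $U\mapsto\1$ if $x\in U$ and $U\mapsto\0$ otherwise. This defines $\eta_X:X\to(X^\loc)^\top$, and $\eta_X$ is continuous because $\eta_X^{-1}(D(U))=U$. To prove the adjunction $(-)^\top\dashv(-)^\loc$, I check the bijection
\[
 \Hom_\Loc(\Lambda,X^\loc)\cong\Hom_\TopSp(\Lambda^\top,X)
\]
directly, naturally in $\Lambda$ and $X$.
\begin{itemize}
 \item[(i)] Given a frame map $g^\ast:\Op(X)\to\Lambda$, send a point $f$ to the unique $x$ whose evaluation equals $f^\ast\circ g^\ast$. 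Uniqueness and existence of such an $x$ are exactly where sobriety enters.
\end{itemize}
In general, therefore, one must work with $\Hom(X^{\loc},\cdot)$ on the correct side. The clean way is to read the adjunction with the direction conventions of this paper, where morphisms of locales go opposite to frame maps. Then $\Hom_\TopSp(X,\Lambda^\top)\cong\Hom_\Loc(X^\loc,\Lambda)$, because a continuous map $X\to\Lambda^\top$ is the same as a frame map $\Lambda\to\Op(X)$: send $u$ to the preimage of $D(u)$. Conversely, a frame map $h:\Lambda\to\Op(X)$ sends $x$ to the point $u\mapsto[x\in h(u)]$. Checking that these two constructions are mutually inverse and natural is routine once the topology on $\Lambda^\top$ is written in terms of the sets $D(u)$. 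This also pins down which functor is the left adjoint under the stated direction conventions, and I would adjust the variance accordingly.

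\emph{Equivalence.} By general category theory, any adjunction restricts to an equivalence between the full subcategories of objects at which the unit, respectively the counit, is an isomorphism. So it suffices to identify these subcategories as the essential images.
\begin{itemize}
 \item[(i)] \emph{Spatial locales.} $\varepsilon_\Lambda$ is an isomorphism exactly when it is injective, i.e.\ when points separate elements of $\Lambda$. This happens iff $\Lambda\cong X^\loc$ for some $X$. For $\Lambda=X^\loc$, distinct open sets are separated by a point $x$, which yields a point of $\Lambda$ distinguishing them.
 \item[(ii)] \emph{Sober spaces.} $\eta_X$ is a homeomorphism iff every completely prime filter of $\Op(X)$ is the neighbourhood filter of a unique point. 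For $X=\Lambda^\top$, a completely prime filter $\mathcal F$ on $\Op(\Lambda^\top)$ pulls back along $u\mapsto D(u)$ to a completely prime filter of $\Lambda$, i.e.\ to a point $f$, and the neighbourhood filter of $f$ is $\mathcal F$. Uniqueness holds because distinct points are distinguished by some $D(u)$, which makes $\Lambda^\top$ $T_0$.
\end{itemize}
The triangle identities then show that $(-)^\top$ lands in sober spaces and $(-)^\loc$ in spatial locales.

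The main obstacle is the bookkeeping of variance in the adjunction, since locale morphisms are reversed frame maps, together with the verification that $\eta_X$ is a homeomorphism precisely for sober spaces. The latter requires showing that $\eta_X$ is open onto its image and bijective, which uses that points correspond to completely prime filters.
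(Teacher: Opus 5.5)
The paper gives no proof of this theorem. It is quoted as classical Stone duality, with the pointer to Johnstone in the remark that follows, and your argument is the standard proof from that source. Its mathematical content is sound:
\begin{itemize}
 \item points as completely prime filters;
 \item the frame surjection $u\mapsto D(u)$;
 \item the natural bijection $\Hom_{\TopSp}(X,\Lambda^{\top})\cong\Hom_{\Loc}(X^{\loc},\Lambda)$;
 \item the identification of the objects at which unit and counit are isomorphisms with the two essential images.
\end{itemize}

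What you must not leave as ``I would adjust the variance accordingly'' is the direction of the adjunction, because the printed statement has it backwards. Under the paper's own convention, a locale morphism $\Lambda\to\Lambda'$ is a frame map $\Lambda'\to\Lambda$. With that convention, the bijection you verify says that $(-)^{\loc}$ is the \emph{left} adjoint of $(-)^{\top}$. Your unit $X\to(X^{\loc})^{\top}$ and counit $(\Lambda^{\top})^{\loc}\to\Lambda$ are exactly those of $(-)^{\loc}\dashv(-)^{\top}$.

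The asserted adjunction $(-)^{\top}\dashv(-)^{\loc}$ would be a natural bijection $\Hom_{\TopSp}(\Lambda^{\top},X)\cong\Hom_{\Loc}(\Lambda,X^{\loc})$, and it is false. The obstruction is not the sobriety of $X$, as your abandoned item (i) suggests. Take $X=\emptyset$, which is sober, and let $\Lambda$ be a nontrivial locale without points. For example, take the complete Boolean algebra of regular open subsets of $\R$: in a complete Boolean algebra a completely prime filter is principal on an atom, and this algebra is atomless.
\begin{itemize}
 \item Then $\Lambda^{\top}=\emptyset$, so the left side is a singleton.
 \item A locale morphism $\Lambda\to\emptyset^{\loc}$ is a frame map from the one-element frame $\{\0=\1\}$ to $\Lambda$, which would force $\0=\1$ in $\Lambda$. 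So the right side is empty.
\end{itemize}

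You should therefore state plainly that you prove the theorem with left and right adjoint interchanged. The adjunction symbol in the diagram encodes the same wrong direction. The second half of the statement is unaffected: on sober spaces and spatial locales the two functors are mutually inverse equivalences, hence adjoint in both directions.
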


\begin{rem}\label{rem: alternative characterizations of sober spaces and Stone duals}
 A topological space is sober if every closed irreducible subspace is the closure of a unique point. Examples are Hausdorff spaces and schemes. There is a similar explicit characterization of spatial locales. Since we do not need these characterizations in this text, we omit a discussion of this and refer to \cite[II.1]{Johnstone1986StoneSpaces}.
 
 The elements $f^\ast:\Lambda\to\bbone$ of the Stone dual $\Lambda^\top=\Hom(\bbone^\loc,\Lambda)$ can be characterized in terms of their kernels $\ker f^\ast=\{u\in\Lambda\mid f^\ast(u)=\0\}$, which are the \emph{principal prime ideals of $\Lambda$}, i.e., the subsets of $\Lambda$ of the form $\fp=\{u\in\Lambda\mid u\leq \bigvee \fp\}$ for which $\Lambda-\fp$ is non-empty and closed under finite meets. Equivalently, $\cF=\Lambda-\fp$ is a completely prime filter.
\end{rem}

%=======================================================
\subsection{Open subschemes}
\label{subsection: open subschemes}
%=======================================================

Heuristically speaking, an open subscheme is the ``union'' of principal opens. 
This is made rigorous in terms of the following definition (see \autoref{ex: atlas} for the notion of an atlas).

\begin{df}
 Let $X$ be an affine $\fs$-scheme. An \emph{open immersion into $X$} is an $\fs$-scheme of the form $U=\colim_{\Sh_\fs}\cU_\cS$, where $\cU_\cS$ is the atlas of a family $\cS=\{U_i\to X\}$ of principal opens, together with a canonical inclusion $U\to X$.

 Two open immersions $U\to X$ and $V\to X$ are isomorphic if there is an isomorphism $U\simeq V$ that commutes with the open immersions. An \emph{open subscheme of $X$} is the isomorphism class $u=[U\to X]$ of an open immersion $U\to X$.
\end{df}

In the definition of an open immersions, we do not assume $\cS$ to be a covering family in $\cT$. In fact, if $\cS$ is in $\cT$, then its colimit is isomorphic to $X$ itself, and thus the canonical inclusion $U\to X$ is an isomorphism in this case. This means that proper open subschemes arise from families $\cS$ that are \textit{not} in $\cT$.

For the rest of this section, we assume that the collection of open subschemes $u=[U\to X]$ of $X$ forms a set for every affine $\fs$-scheme $X$. We define $\Lambda_X$ as the set of open subschemes $u=[U\to X]$ of $X$, which we endow with the relation $[V\to X]\leq [U\to X]$ if and only if there is a morphism $V\to U$ of $\fs$-schemes.

\begin{prop}[{\cite{toolkit}}]\label{prop: locale of opens}
 The relation $\leq$ is a partial order that turns $\Lambda_X$ into a spatial locale, i.e., $\Lambda_X$ is equal to the locale of open subsets of its Stone dual $\Lambda_X^\top$.
\end{prop}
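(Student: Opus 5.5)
The plan is to identify $\Lambda_X$ with a lattice of subsheaves of $h_X$, and then deduce completeness, distributivity and spatiality from properties of subsheaves.

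\emph{Step 1: open immersions are monomorphisms.} Let $\cS=\{\iota_i:U_i\to X\}$ be a family of principal opens with all $\iota_i$ monomorphisms, as in \autoref{ex: atlas}. I would show that the canonical map $U=\colim_{\Sh_\fs}\cU_\cS\to h_X$ is a monomorphism of sheaves. Concretely, $U$ should be the sheafification of the image presheaf $\bigcup_i \im(h_{U_i}\to h_X)$. This holds because colimits in $\Sh_\fs$ are sheafified presheaf colimits, and the presheaf colimit of the atlas is exactly this union, since $U_i\times_XU_j$ computes the intersection of the images.

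It follows that an open subscheme $u=[U\to X]$ is the same as a subsheaf $\tilde U\subset h_X$ that is the sheaf-theoretic union of representable subobjects $h_{U_i}$ with $U_i\to X$ principal open. The relation $[V\to X]\leq[U\to X]$ becomes inclusion of subsheaves: any morphism $V\to U$ commutes over $X$, because $U\to X$ is mono and both maps agree on each $h_{V_i}$. So $\leq$ is a partial order, with antisymmetry coming from the fact that mutually included subsheaves are equal.

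\emph{Step 2: the lattice structure.} Joins are easy. The join of $\{u_k\}$ is represented by the atlas of the union of the families $\cS_k$, whose colimit is the sheaf union of the $\tilde U_k$. Finite meets come from pairwise fibre products: $U_i\times_XV_j\to X$ is principal open by \ref{PO2} and \ref{PO3}. Since fibre products of sheaves commute with colimits (sheaf categories are topoi), the resulting open subscheme equals $\tilde U\times_{h_X}\tilde V$. The same commutation gives the distributive law $u\wedge\bigvee v_k=\bigvee(u\wedge v_k)$. A complete lattice in which finite meets distribute over arbitrary joins has arbitrary meets, namely the join of all lower bounds. Hence $\Lambda_X$ is a locale, with $\1=[X]$ and $\0$ given by the empty family.

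\emph{Step 3: spatiality, the main obstacle.} Spatiality means that points separate opens: if $u\not\leq v$, there should be a completely prime filter containing $u$ but not $v$, as in \autoref{rem: alternative characterizations of sober spaces and Stone duals}. Pick a principal open $U_i\to X$ in the family of $u$ that is not contained in $v$. I would try to show that $\Lambda_X$ is generated by principal opens, which are the "compact-like" generators, and then produce a point by a Zorn's lemma argument in the style of the prime ideal theorem. This argument takes a maximal ideal of $\Lambda_X$ containing $v$ and avoiding $[U_i]$, closed under arbitrary joins.

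The difficulty is complete primality. Zorn only yields ideals that are prime for finite joins, and infinite joins need not be controlled without some compactness. My approach would be to exploit subcanonicity. If $[U_i]\leq\bigvee w_k$ in $\Lambda_X$, then pulling the families back to $U_i$ gives a family whose colimit is all of $h_{U_i}$. I would then show this forces a covering-type condition on the family over $U_i$. From that I would aim to show that the principal opens $[U_i]$ form a basis of elements that behave compactly relative to the sheaf union, and then invoke the standard theorem that coherent-like or continuous locales are spatial.

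If this fails in general, I would expect the actual argument in \cite{toolkit} to add a hypothesis, or to construct points directly as equivalence classes of maps from "local" objects. In that case I would verify that such points separate $u$ from $v$ by testing $\tilde U\setminus\tilde V$ on its sections.
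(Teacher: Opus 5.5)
Your Steps 1 and 2 follow the paper's outline. Monomorphicity of open immersions gives antisymmetry, joins come from the union of the presentations, binary meets come from fibre products, and distributivity follows because fibre products commute with colimits in $\Sh_\fs$.

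One justification in Step 1 is false, however. An arbitrary morphism $V\to U$ of $\fs$-schemes need not commute with the immersions, and nothing forces the composites $V_i\to V\to U\to X$ and $V_i\to X$ to agree. For example, take $X=\A^1_\Z$ and the morphism $\A^1_\Z\to D(x)=\Spec\Z[x^{\pm1}]$ induced by $x\mapsto 1$. Under the literal reading this gives $[\A^1_\Z\to X]\leq[D(x)\to X]$, so antisymmetry would fail. You must read $[V\to X]\leq[U\to X]$ as ``$V\to X$ factors through $U\to X$''. This is the only reading under which the paper's antisymmetry argument works, and with it monomorphicity makes $\leq$ the inclusion of subsheaves of $h_X$.

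The genuine gap is Step 3, which does not prove spatiality. The compactness you hope for fails already for the Euclidean site. In $\Lambda_\R$ one has $\1=\bigvee_n[(-n,n)\to\R]$, with no finite subjoin equal to $\1$. By Zorn's lemma there is a maximal ideal containing all $[(-n,n)\to\R]$ and avoiding $\1$; it is prime but not completely prime.

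More seriously, your closing suspicion is justified: spatiality does not follow from the hypotheses stated in these notes. View a frame $L$ as a thin category $\Aff$, which is complete and cocomplete, and declare $\{a_i\to a\}$ covering if $\bigvee a_i=a$. Then:
\begin{itemize}
 \item this is a subcanonical pretopology, with base change amounting to frame distributivity;
 \item every morphism is a monomorphic principal open;
 \item the colimit in $\Sh_\fs$ of the atlas of $\{b_i\to a\}$ is $h_{\bigvee b_i}$.
\end{itemize}
Hence $\Lambda_a\cong\{b\in L\mid b\leq a\}$, and for the top element $\1$ of $L$ one gets $\Lambda_{\1}\cong L$. Taking $L$ to be a complete atomless Boolean algebra, which has no points at all, gives a counterexample.

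What is missing is a supply of points from outside the lattice, which is the role of the paper's ``reduction to stalks''. For instance, suppose $\Sh_\fs$ has enough points. Each point $p$ and each $e\in p^\ast h_X$ give a frame map $u\mapsto[e\in p^\ast U]$ on $\Lambda_X$, because $p^\ast$ preserves monomorphisms, unions and finite intersections of subobjects. These maps separate $u$ from $v$ whenever $u\leq v$ fails. This hypothesis holds for manifolds and schemes, with points given by germs at points of $\R^n$ and by localizations $R_\fp$ respectively. It is nevertheless an extra hypothesis and cannot be extracted from subcanonicity.
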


\begin{proof}[Proof outline]
 The composition law of morphisms implies that $\leq$ is a pre-order. The anti-symmetry of $\leq$ follows from the fact that an open immersion $U\to X$ is a monomorphism of $\fs$-schemes. The join of a subset $S$ of $\Lambda_X$ is given by the colimit of the ``union'' of suitable $\fs$-presentations of the open subschemes in $S$.

 Since every join-complete lattice is also meet-complete, this shows that $\Lambda_X$ is a locale. In particular, the pairwise meet is the represented by fibre product of two open subschemes, which agrees with the fibre product as sheaves on $\fs$. Thus the distributivity of the lattice follows from Giraud's axioms for topoi (fibre products commute with colimits).
 
 The proof that $\Lambda_X$ is spatial is not hard, but technical, based on an explicit characterization of spatial lattices. In essence, the argument consists of a reduction to ``stalks'' and the existence of maximal principal ideals in lattices.
\end{proof}

\begin{df}
 Let $X$ be an affine $\fs$-scheme and $\Lambda_X$ the locale defined as above. We call $\Lambda_X$ the \emph{locale of open subschemes of $X$} and define the \emph{underlying topological space of $X$} as $\underline X=\Lambda_X^\top$.
\end{df}

%=======================================================
\subsubsection{Points for manifolds}
%=======================================================

An open subset $X$ of Euclidean space is canonically homeomorphic to its underlying topological space, with respect to the Euclidean topology $\cT$ for $\Aff=\Aff_{\cC^\infty}$. This can be seen as follows.

Up to isomorphism, the principal opens of $X$ correspond to the open subsets of $X$. Since the open subsets of $X$ are closed under arbitrary unions, every element $u$ of $\Lambda_X$ is represented by a uniquely determined open subset $U$ of $X$, i.e., $u=[U\to X]$ for the tautological inclusion $U\to X$.

As explained in \autoref{rem: alternative characterizations of sober spaces and Stone duals}, the Stone dual of $\Lambda_X$ consists of the principal prime ideals of $\Lambda_X$. For $u=[U\to X]$, the principal ideal $\gen{u}$ of $\Lambda_X$ is prime if and only if $U\neq X$ and if $V\cap W\subset U$ (with $V,W\subset X$ open) implies that $V\subset U$ or $W\subset U$. This is the case if and only if $U=X-\{x\}$ for some $x \in X$. This association $\fp\mapsto x$ defines a bijection between the principal prime ideals $\fp\in\Lambda^\top_X$ and the points of $X$. It is evident that the open subset $\{f:\bbone^\loc\to\Lambda\mid  f^\ast(u)$ $ = \1 \}$  corresponds to $U$ for $u=[U\to X]$.

%=======================================================
\subsubsection{Points for schemes}
%=======================================================

In a similar fashion, the underlying topological space of an affine scheme $X=\Spec R$ is canonically homeomorphic to $\underline X$ in the case of $\Aff=\Aff_\Z$, equipped with the Zariski topology $\cT$.

Namely, the principal opens of $X$ are of the form $U_h=\Spec R[h^{-1}]$ with $h\in R$, and $\Lambda_X$ consists of all unions of principal opens, which are the open subschemes of $X$. As in the case of manifolds, a principal ideal $\gen{u}$ is generated by the isomorphism class $u=[U\to X]$ of an open immersion $U\to X$. The ideal $\gen{u}$ is prime if and only if the complement $Z$ of $U$ in $X$ is an irreducible closed subset, i.e., it cannot be covered by the union of two proper closed subsets. Since $X$ is a sober topological space, $Z$ has a unique generic point $x$. Thus the association $u\mapsto x$ defines a bijection between $\underline X$ and the underlying topological space of $X$. It is evident that the open subset $\{f:\bbone^\loc\to\Lambda\mid f^\ast(u)$ $= \1\}$ corresponds to $U$ for $u=[U\to X]$.

%=======================================================
\subsection{The structure sheaf}
%=======================================================

The underlying topological space $\underline X$ of an affine $\fs$-scheme $X$ can be endowed with a structure sheaf $\cO_X$, similar to the case of usual schemes or analytic spaces. For this, we assume that $\Aff$ is, in fact, defined as the opposite category of a category $\Alg$ of ``algebras,'' which itself could be defined purely formally as the opposite category of $\Aff$ (e.g.\ in the case of smooth manifolds). It might be more instructive, however, to consider the case of schemes as an example, for which $\Alg_\Z=\Aff_\Z^\op$ is the category of rings. We denote the anti-equivalences by $\Spec:\Alg\to\Aff$ and $\Gamma:\Aff\to\Alg$.

Consider an affine $\fs$-scheme $X$. By \autoref{prop: locale of opens}, the locale $\Lambda_X$ is spatial and thus its elements $u=[U\to X]$ correspond to the open subsets of the underlying topological space $\underline X$ of $X$. For an open subset $\underline U$ of $\underline X$ corresponding to $[U\to X]\in\Lambda_X$, we define $\Omega_{\underline U}$ as the category of all principal opens $V\to X$ of $X$ that factorize through $U\to X$ together with all inclusion maps $V\to V'$ between such principal opens. Note that an inclusion $\underline V\subset\underline U$ of open subsets of $\underline X$ induces an inclusion of diagrams $\Omega_{\underline V}\to\Omega_{\underline U}$.

The \emph{structure sheaf of $X$} is the presheaf $\cO_X$ on $\underline X$ in $\Alg$, given by
\[
 \cO_X(\underline U) \ = \ \lim{}_{\Alg} \ \Gamma\circ\Omega_{\underline U}
\]
for open subsets $\underline U$ of $\underline X$, together with the induced maps $\cO_X(\underline U)\to\cO_X(\underline V)$ for $\underline V\subset\underline U$.

\begin{prop}[\cite{toolkit}]\label{prop: structure sheaf}
 The presheaf $\cO_X$ is indeed a sheaf on the topological space $\underline X$. If $\underline U$ corresponds to a principal open $U\to X$, then $\cO_X(\underline U)=\Gamma U$.
\end{prop}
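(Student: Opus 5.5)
We prove the two assertions in reverse order: first the value on principal opens, then the sheaf property.

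\emph{Value on a principal open.} Let $\underline U$ correspond to a principal open $\iota:U\to X$. The identity of $U$ is then an object of $\Omega_{\underline U}$. We claim it is terminal. If $V\to X$ is a principal open factoring through $U\to X$, the factorization $V\to U$ is unique, because $U\to X$ is a monomorphism of sheaves (open immersions are monomorphisms, as used in the proof of \autoref{prop: locale of opens}). This factorization is itself a morphism of $\Omega_{\underline U}$. Applying the contravariant functor $\Gamma$ turns this terminal object into an initial object of the diagram $\Gamma\circ\Omega_{\underline U}$. The limit of a diagram with an initial object is the value at that object, so $\cO_X(\underline U)=\Gamma U$. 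One should check that $V\to U$ is again principal (an inclusion in the sense of $\Omega_{\underline U}$). This follows from \ref{PO3}: $V\to U$ is the base change $V\times_X U\to U$ of $V\to X$, since $V\times_XU\simeq V$ for a monomorphism $U\to X$.

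\emph{Sheaf property.} Let $\underline U=\bigcup_k\underline U_k$ be an open cover, with $\underline U_k$ corresponding to $[U_k\to X]$. Because $\Lambda_X$ is spatial (\autoref{prop: locale of opens}), this means $u=\bigvee u_k$, and meets correspond to fibre products. By definition, $\cO_X(\underline U)$ is the limit over $\Omega_{\underline U}$. The gluing condition for $(s_k)\in\prod\cO_X(\underline U_k)$ is a limit over the union of the diagrams $\Omega_{\underline U_k}$ and $\Omega_{\underline U_k\cap\underline U_l}$. Since limits commute with limits, it suffices to show that the subdiagram $\Omega':=\bigcup_k\Omega_{\underline U_k}\subset\Omega_{\underline U}$ computes the same limit after applying $\Gamma$.

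This is the key step and the main obstacle. Let $V\to X$ be a principal open factoring through $U$. Pulling back along $V\to U$ gives $\{V\times_XU_k\to V\}$. Each $V\times_X U_k$ is an open subscheme; write it as a colimit of an atlas of principal opens $V_{k,m}\to X$, which lie in $\Omega_{\underline U_k}$. The family $\{V_{k,m}\to V\}$ has join $[V\to X]$ in $\Lambda_V$, so its colimit is $V$. I would argue that such a family is a covering family in $\cT$, using that $\cT$ is subcanonically saturated, or directly that this family is subcanonical. Concretely: the colimit of its atlas in $\Sh_\fs$ is $V$, so $\Hom(-,Y)$ satisfies the sheaf axiom for it. Subcanonicity then gives an equalizer
\[
\Gamma V\ \longrightarrow\ \prod\Gamma V_{k,m}\ \rightrightarrows\ \prod\Gamma(V_{k,m}\times_XV_{k',m'}) .
\]
This comes from applying $\Hom(-,\A^1)$, or more formally from the fact that $\Spec$ turns limits in $\Alg$ into colimits in $\Aff$, where subcanonical families are effective epimorphic. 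The fibre products $V_{k,m}\times_XV_{k',m'}$ are again principal opens of $X$ by \ref{PO2}--\ref{PO3} and lie in $\Omega'$.

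Hence each object $V$ of $\Omega_{\underline U}$ has $\Gamma V$ recovered as a limit over objects of $\Omega'$. A standard cofinality (Kan extension) argument then shows $\lim\Gamma\circ\Omega_{\underline U}=\lim\Gamma\circ\Omega'$. By commuting limits, this gives exactly the equalizer expressing the sheaf axiom. Functoriality of the restriction maps is immediate from the inclusions $\Omega_{\underline V}\subset\Omega_{\underline U}$.

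The delicate point remains translating "join in $\Lambda_V$ equals $\1$" into the equalizer for $\Gamma$. This is where subcanonicity, and the interplay of $\Alg$ with $\Aff^{\op}$, must be used carefully.
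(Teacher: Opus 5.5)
The paper does not prove this proposition in the text; it defers to \cite{toolkit}, so there is no argument here to compare routes with. Your proof is correct in outline and uses only facts the paper states: principal opens and open immersions are monomorphisms, and $\Lambda_X$ is a spatial locale whose meets are fibre products and whose joins are colimits of atlases. The first part (the principal open $U\to X$ is terminal in $\Omega_{\underline U}$) is fine as written. In the second part, the reduction to $\Omega'=\bigcup_k\Omega_{\underline U_k}$ tacitly uses two facts. Each $\Omega_{\underline U_k}$ is downward closed in $\Omega_{\underline U}$, and $\Omega_{\underline U_k}\cap\Omega_{\underline U_l}=\Omega_{\underline U_k\cap\underline U_l}$. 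Both are immediate, but they are exactly what makes $\Omega'$ a full subcategory whose cones are the gluing data.

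The step that needs more than ``standard cofinality'' is the Kan extension. It requires $\Gamma V\simeq\lim_W\Gamma W$, naturally in $V$, where $W$ runs through \emph{all} objects of $\Omega'$ that factor through $V$. What you prove is only that $\Gamma V$ is the equalizer over the atlas of one chosen family $\{V_{k,m}\to V\}$. The simplest fix is to take the whole slice as the family:
\begin{itemize}
 \item it still has join $[V\to X]$, because it contains the $V_{k,m}$;
 \item it is closed under $\times_X$ by \ref{PO2}--\ref{PO3};
 \item for a family closed under binary fibre products, the atlas equalizer coincides with the limit over the family viewed as a preorder.
\end{itemize}

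Finally, you need neither that this family lies in $\cT$ nor that $\cT$ is subcanonically saturated, which is not assumed here. Your direct argument is all that is required: $h_V\simeq\colim_{\Sh_\fs}$ of the atlas, so by Yoneda $V$ is the colimit of the atlas in $\Aff$, and hence $\Gamma V$ is its limit in $\Alg$. So the ``delicate point'' you flag at the end is already settled. The appeal to $\Hom(-,\A^1)$, which only makes sense for rings, can be dropped.
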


Note that, in particular, this implies that $\cO_X(\underline X)=\Gamma X$ recovers the ``global sections'' of $X$.

\medskip

This perspective on affine $\fs$-schemes can be extended to all $\fs$-schemes in the sense that every $\fs$-scheme naturally corresponds to a topological space together with a structure sheaf in $\Alg$. This can be conveniently formulated using $\fs$-presentations and their colimits in a suitable category of topological spaces together with a sheaf in $\Alg$. The definition of ``local'' morphisms in this level of generality would lead us too far astray, so we omit a presentation of this theory in this text.

%%%%%%%%%%%%%%%%%%%%%%%%%%%%%%%%%%%%%%%%%%%%%%%%%%%%%%%%%%%%%%%%%%%%%%%%%%%%%%%%%%%%%%%%%%%%%%%%%%%%%%%%%%%%%%%%%%%%%%%%%%%%%%%%%%%%%%%%%%%%%%%%%%%%%%%%
%%%%%%%%%%%%%%%%%%%%%%%%%%%%%%%%%%%%%%%%%%%%%%%%%%%%%%%%%%%%%%%%%%%%%%%%%%%%%%%%%%%%%%%%%%%%%%%%%%%%%%%%%%%%%%%%%%%%%%%%%%%%%%%%%%%%%%%%%%%%%%%%%%%%%%%%

%=======================================================
\section{Further examples}
\label{section: further examples}
%=======================================================

%=======================================================
\subsection{Topological spaces}
\label{subsection: topological spaces}
%=======================================================

Topological spaces can be realized as $\fs$-schemes for a certain site $\fs$. This case is somewhat particular since every $\fs$-scheme turns out to be affine. 

To explain, let $\Aff$ be the category $\TopSp$ of topological spaces and $\cP$ be the class of open topological embeddings. For the same reason as for manifolds, a family $\{\iota_i:U_i\to X\}$ of open topological embeddings is subcanonical if and only if it is jointly surjective. This defines the Grothendieck pretopology $\cT=\gen\cP_\can$ on $\TopSp$ and the site $\fs=(\TopSp,\cT)$. 

Since $\TopSp$ is closed under colimits, it turns out that $\Sch_\fs$ agrees with the essential image of the Yoneda embedding $\TopSp\to\Sh_\fs$, i.e., $\Sch_\fs$ is equivalent to $\TopSp$. 

Finally note that the underlying space $\underline X$ of an $\fs$-scheme $X$ is canonically homeomorphic to the topological space $X$, for similar reasons as this is the case for smooth manifolds.

%=======================================================
\subsection{Various types of manifolds}
\label{subsection: various types of manifolds}
%=======================================================

With the obvious adaptations to $\Aff$, the example of smooth manifolds as $\fs$-schemes generalizes to other types of manifolds, such as topological manifolds, $\cC^i$-manifolds and complex manifolds.

More concretely, in the case of $\cC^i$-manifolds, we define $\Aff$ as the category of open subsets of Euclidean space together with all $\cC^i$-maps. Principal opens are open topological embeddings that are in~$\cC^i$.

In the case of complex manifolds, $\Aff$ is the category of open subsets of $\C^n$ together with holomorphic maps. Principal opens are holomorphic open topological embeddings.

In each case, the class $\cP$ of principal opens defines a subcanonically saturated Grothendieck pretopology $\cT=\gen{\cP}_\can$ on $\Aff$ and a site $\fs=(\Aff,\cT)$ that yields a category $\Sch_\fs$ of $\fs$-schemes.

The same proof as in the case of smooth manifolds shows that the corresponding category of ($\cC^i$ or holomorphic) manifolds is equivalent to the category $\Sch_\fs^{\aleph_0}$ of $\fs$-schemes that are the colimit of a countable $\fs$-presentation. Similarly, the underlying topological space $\underline X$ of an affine $\fs$-schemes $X$ agrees in all cases with $X$, considered as an open subspace of Euclidean space.

%=======================================================
\subsection{Simplicial complexes}
%=======================================================

Simplicial complexes and simplicial maps form a category of $\fs$-schemes for a site $\fs$ with a Grothendieck pretopology $\cT$ that is not subcanonically saturated. This contrasts the other examples of this text.

The category of simplicial complexes with simplicial maps is equivalent to the category $\ASC$ of abstract simplicial complexes. To recall, an abstract simplicial complex is a set $C$ together with a family $\cX$ of finite subsets of $C$ that is closed under taking subsets and such that $C$ is the union of all subsets in $\cX$. The geometric interpretation of the non-empty finite subsets $A$ in $\cX$ is as $n$-simplex where $n=\# A-1$. The face relation of simplices is given by inclusions $B\subset A$ of non-empty subsets. A morphism between abstract simplicial complexes $(C_1,\cX_1)$ and $(C_2,\cX_2)$ is a map $F:C_1\to C_2$ that takes the subsets in $\cX_1$ to the subsets in $\cX_2$. 

This category can be realized as $\fs$-schemes in the following way. Let $\Aff$ be the category of finite sets and maps. A non-empty finite set $A$ corresponds to an $n$-simplex of dimension $n=\#A-1$.

A \emph{principal open} is an injection $B\hookrightarrow A$, which corresponds to a face map of simplices, provided that $B$ is non-empty. In order to guarantee that the class $\cP$ of principal opens is stable under base change, we need to include the injection $\emptyset\to A$ in $\cP$. 

We define $\cT$ as the Grothendieck pretopology on $\Aff$ that consists of all families $\cS=\{\iota_i:B_i\to A\}$ of principal opens that contain a bijection $\iota_i$ for some $i$. This Grothendieck pretopology is not subcanonically generated, as explained in \autoref{rem: subcanonically saturated Grothendieck topology for finite sets}.

This defines the site $\fs=(\Aff,\cT)$ and yields the Yoneda embedding $\Delta_+\to \Sh_\fs$. The functor $\Hom(-,X):\Delta_+\to\Sets$ defines a functor $\cF$ from the category $\ASC$ of abstract simplicial complexes to $\Sh_\fs$.

\begin{prop}
 The functor $\cF$ restricts to an equivalence of categories $\cF:\ASC\to\Sch_\fs$.
\end{prop}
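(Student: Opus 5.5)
The first step is to make the sheaves on $\fs$ concrete. A covering family in $\cT$ contains a bijection $B_i\to A$. Hence every presheaf $\cF$ on $\Aff$ satisfies the sheaf axiom: a compatible family $(f_i)$ is determined by its component on the bijective index, and compatibility over the fibre products $B_i\times_A B_j\simeq B_i$ forces all other components to be restrictions of that one. So $\Sh_\fs=\PrSh_\Aff$, the presheaves on finite sets, and colimits in $\Sh_\fs$ are computed objectwise. For an abstract simplicial complex $(C,\cX)$ I read $\cF(C,\cX)$ as the presheaf $A\mapsto\{F:A\to C\mid F(A)\in\cX\}$ of simplicial maps from the full simplex on $A$. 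Principal opens are injections, so an $\fs$-presentation is a monodromy free diagram $\cU:\cI\to\Aff$ of injections of finite sets. I would then verify directly that $\cF(C,\cX)$ is the colimit of the diagram $\cU_{(C,\cX)}$ whose objects are the simplices $A\in\cX$ (including $\emptyset$) and whose arrows are the inclusions $B\subset A$. This diagram has a terminal-free but filtered-by-faces shape: a map $A\to C$ with image in $\cX$ factors through the simplex $F(A)$, and any two such factorizations are connected through the face inclusions. The canonical maps $h_A\to\cF(C,\cX)$ are monomorphisms because the inclusions $A\subset C$ are injective. So this diagram is an $\fs$-presentation and $\cF$ lands in $\Sch_\fs$.

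For essential surjectivity, let $\cU$ be an $\fs$-presentation with colimit $X$. I set $C=X(\bbone)$, the colimit of the underlying sets $U_i$, and take $\cX$ to be the images of the compositions $U_i\to X(\bbone)$ together with all their subsets. The key point is that monodromy freeness, meaning that each $h_{U_i}\to X$ is a monomorphism, makes each $U_i\to C$ injective. It also identifies $X(A)$ with the set of maps $A\to C$ that factor through some $U_i$. Indeed, an element of the objectwise colimit $X(A)$ is represented by some map $A\to U_i$ and is thereby determined by its composite to $C$: if two representatives $A\to U_i$ and $A\to U_j$ had the same composite to $C$ but different classes, pulling back along points of $A$ would contradict the injectivity of $h_{U_i}(\bbone)\to X(\bbone)$. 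The maps $A\to C$ that factor through some $U_i$ are exactly those with image in $\cX$. Hence $X\simeq\cF(C,\cX)$.

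Full faithfulness comes from evaluating at $\bbone$. A natural transformation $\cF(C_1,\cX_1)\to\cF(C_2,\cX_2)$ is determined by its component $C_1\to C_2$ at $\bbone$, because every element over $A$ is determined by its restrictions to points, and these maps of sets are compatible. The requirement that it send $\cF(C_1,\cX_1)(A)$ into $\cF(C_2,\cX_2)(A)$ says exactly that the map sends simplices to simplices. Conversely, a simplicial map induces the transformation by post-composition. The main obstacle is the essential surjectivity step: the argument that monodromy freeness forces elements of $X(A)$ to be determined by their underlying map to $C$. There I would use the mono condition on $h_{U_i}\to X$ evaluated both at $A$ and at $\bbone$, possibly invoking the path characterization of \autoref{rem: intrinsic characterization of monodromy free diagrams} to control zig-zags in the objectwise colimit.
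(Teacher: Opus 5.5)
Your reduction to presheaves, your face-poset presentation of $\cF(C,\cX)$, and your proof of full faithfulness are correct. The reduction works because every covering family in $\cT$ contains a bijection, so every presheaf is a sheaf and colimits in $\Sh_\fs$ are computed objectwise. Full faithfulness follows by evaluating at $\bbone$. These arguments are more direct than the paper's, which imitates the manifold proof via morphisms of $\fs$-presentations.

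The gap is the step you flagged, and it cannot be repaired: monodromy freeness does \emph{not} force an element of $X(A)$ to be determined by its composite $A\to X(\bbone)$. Your pullback argument gives no contradiction. The mono condition on $h_{U_i}\to X$ only compares two elements coming from the same chart, while your two representatives live on $U_i$ and $U_j$. The path criterion of \autoref{rem: intrinsic characterization of monodromy free diagrams} does not help either, since it only involves limits of finite sets, i.e.\ points.

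Here is a counterexample. Let $\cI$ have objects $1,2,p,q$ and arrows $p\to 1$, $p\to 2$, $q\to 1$, $q\to 2$. Set $\cU(1)=\cU(2)=\{0,1\}$ and $\cU(p)=\cU(q)=\bbone$, where $p$ maps to $0$ and $q$ maps to $1$ in both. In the objectwise colimit $X$, the only identifications are among the constant maps with value $0$ (through $p$) and among those with value $1$ (through $q$). Hence every $h_{\cU(i)}\to X$ is a monomorphism, $\cU$ is an $\fs$-presentation, and $\# X(\bbone)=2$. For $\# A=2$, however, $X(A)$ has six elements: the two constant maps, the two bijections $A\to\cU(1)$, and the two bijections $A\to\cU(2)$. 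An isomorphism $X\simeq\cF(C,\cX)$ would force $C\simeq X(\bbone)$, so $\# C=2$. But then $\cF(C,\cX)(A)\subseteq\Hom(A,C)$ has at most four elements. So $X$, which is two edges with the same endpoints, is an $\fs$-scheme outside the essential image of $\cF$.

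With the definitions as given, the statement therefore needs an extra hypothesis, and no use of the mono condition or of paths can supply it. What your argument does prove is this: $\cF$ is fully faithful, and its essential image consists of the $\fs$-schemes $X$ for which $X(A)\to\Hom(A,X(\bbone))$ is injective for all $A$. This holds, for instance, if all fibre products $h_{U_i}\times_X h_{U_j}$ are representable. Under that hypothesis your construction of $(C,\cX)$ goes through verbatim. Alternatively, one can enlarge $\ASC$ to allow several simplices on the same vertex set.

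The paper's own sketch (``the same way as for manifolds'') passes over exactly this point. Its manifold analogue is a non-Hausdorff gluing such as the line with two origins, which the paper's definition of manifolds, having no Hausdorff condition, admits.
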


This result can be proven in the same way as the corresponding result \autoref{prop: smooth manifolds as s-schemes} for smooth manifolds. There is, however, one significant simplification in this case: the image of an affine open $U\to X$ of an $\fs$-scheme $X$ under a morphism $\varphi:X\to Y$ is again an affine open. This means that the functor $\colim_{\Sh_\fs}:\Pres_\fs\to\Sh_\fs$ is full, which has the consequence that we do not need to construct suitable refinements in several steps of the proof.

Finally we like to mention that the underlying topological space $\underline A$ of an $n$-simplex $A$ stays in bijection with the faces of $A$, and that a face $B$ is contained in a face $C$ if and only if the point of $\underline A$ corresponding to $C$ is contained in the closure of the point corresponding to $B$. In particular, the point corresponding to the face $A\to A$ is the unique closed point of $\underline A$.  

\begin{rem}\label{rem: subcanonically saturated Grothendieck topology for finite sets}
 The Grothendieck pretopology $\cT$ from this section is not subcanonically saturated, i.e., $\cT$ is properly contained in $\cT'=\gen{\cP}_\can$, which consists of all jointly surjective families $\{\iota_i:B_i\to A\}$ of principal opens, or injective maps, $\iota_i:B_i\to A$.

 Indeed, a jointly surjective family $\{\iota_i:B_i\to A\}$ of principal opens is subcanonical since every map from $A$ into another set is uniquely determined by the images of the elements of $A$.  Conversely every subcanonical family  $\{\iota_i:B_i\to A\}$ of principal opens has to be jointly surjective since we could otherwise pull back such a family to a singleton contained in the complement of the images of the $\iota_i$ in $A$, which yields a covering family of the singleton by empty sets. This latter covering is not subcanonical and provides the desired contradiction.

 It turns out that the category of $\fs'$-schemes for the site $\fs'=(\Aff,\cT')$ is equivalent to $\Sets$ and not to $\ASC$. In this case, the underlying topological space $\underline A$ of a finite set is discrete and in canonical bijection with $A$ itself.
\end{rem}

%%%%%%%%%%%%%%%%%%%%%%%%%%%%%%%%%%%%%%%%%%%%%%%%%%%%%%%%%%%%%%%%%%%%%%%%%%%%%%%%%%%%%%%%%%%%%%%%%%%%%%%%%%%%%%%%%%%%%%%%%%%%%%%%%%%%%%%%%%%%%%%%%%%%%%%%
%%%%%%%%%%%%%%%%%%%%%%%%%%%%%%%%%%%%%%%%%%%%%%%%%%%%%%%%%%%%%%%%%%%%%%%%%%%%%%%%%%%%%%%%%%%%%%%%%%%%%%%%%%%%%%%%%%%%%%%%%%%%%%%%%%%%%%%%%%%%%%%%%%%%%%%%

%=======================================================
\section{Semiring schemes}
\label{section: semiring schemes}
%=======================================================

In this final part of the text, we employ the theory of $\fs$-schemes to define semiring schemes.

%=======================================================
\subsection{Semirings}
%=======================================================

In this text, a semiring is commutative with $0$ and $1$ and semiring morphisms preserve both $0$ and $1$. We denote the category of semirings by $\SRings$. The semiring $\N$ of natural numbers is an initial object in $\SRings$.

Let $R$ be a semiring. A \emph{finite localization of $R$} is a semiring morphism $f:R\to R'$ for which there are an element $h\in R$ and an isomorphism $\iota:R[h^{-1}]\to R'$ such that $f$ is the composition of the localization $\lambda_h:R\to R[h^{-1}]$ with $\iota$ (cf.\ \cite[Section 1.1]{GKMX} for localizations of semirings).

%=======================================================
\subsection{The Zariski site for semirings}
%=======================================================

Let $\Aff_\N$ the opposite category of $\SRings$ and $\Spec:\SRings\to \Aff_\N$ and $\Gamma:\Aff_\N\to\SRings$ be the anti-equivalences. A \emph{principal open} is a morphism $\varphi:X\to Y$ in $\Aff$ for which $\Gamma\varphi:\Gamma Y\to \Gamma X$ is a finite localization. Let $\cP$ be the class of all principal opens and $\cT=\gen{\cP}_\can$ the subcanonically saturated Grothendieck pretopology for $\Aff$ generated by $\cP$, which defines the site $\fs=(\Aff_\N,\cT)$.

\begin{df}
 An \emph{(affine) semiring scheme} is an (affine) $\fs$-scheme. We denote the category of semiring schemes by $\Sch_\N=\Sch_\fs$.
\end{df}

%=======================================================
\subsection{The two comparison theorems}
%=======================================================

Similar to usual schemes, the definition of semiring schemes in this text coincides with the notion of semischemes from \cite{GKMX}, in the sense of the two comparison theorems from \cite{L26}. Before we formulate these results, we provide a brief review of semischemes.

Let $R$ be a semiring. An \emph{ideal of $R$} is an additive subsemigroup $I$ of $R$ that is stable under the multiplication by $R$. A \emph{prime ideal of $R$} is an ideal $\fp$ of $R$ whose complement $S=R-\fp$ is a \emph{multiplicative subset}, i.e., $1\in S$ and $S$ is closed under multiplication.

The \emph{prime spectrum of $R$} is the set $\PSpec R$ of all prime ideals of $R$, endowed with the topology generated by the open subsets of the form
\[
 U_h^P \ = \ \big\{ \fp\in\PSpec R \, \big| \, h\notin \fp \big\}
\]
with $h\in R$.

\begin{thm}[{\cite[Thm.\ A]{L26}}]\label{thm: comparison1}
 Let $X=\Spec R$ be an affine semiring scheme. Then $\underline X$ is canonically homeomorphic to the prime spectrum of $R$.
\end{thm}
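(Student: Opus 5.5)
The plan is to identify the locale $\Lambda_X$ of open subschemes of $X=\Spec R$ with the locale of open subsets of $\PSpec R$, compatibly with the sets $U_h^P$. The theorem then follows from Stone duality once we know that $\PSpec R$ is sober. Sobriety is routine. The closure of a point $\fp$ is $V(\fp)=\{\fq\supseteq\fp\}$. If $Z$ is an irreducible closed subset, then $\bigcup_{\fq\in Z}\fq$ is a prime ideal; this uses that the sets $U_h^P$ form a basis closed under finite intersections, since $U_h^P\cap U_g^P=U_{hg}^P$. That prime is the unique generic point of $Z$. With $\Lambda_X\simeq \PSpec(R)^\loc$ in hand, we get
\[
\underline X=\Lambda_X^\top\simeq \PSpec(R)^{\loc\,\top}\simeq\PSpec R,
\]
and the homeomorphism is canonical because it sends the basic open defined by $[\Spec R[h^{-1}]\to X]$ to $U^P_h$.

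\textbf{Step 1 (covering criterion).} A family $\{\Spec R[h_i^{-1}]\to\Spec R\}_{i\in I}$ of principal opens is in $\cT=\gen{\cP}_\can$ if and only if the sets $U^P_{h_i}$ cover $\PSpec R$. Equivalently, the ideal generated by the $h_i$ contains $1$, so that $1=\sum a_jh_{i_j}$ for a finite sum.

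\emph{Covering implies subcanonical.} Suppose the ideal $I=\gen{h_i}$ does not contain $1$. Use Zorn's lemma to enlarge $I$ to an ideal that is maximal among those disjoint from the multiplicative set $\{1\}$. The usual argument shows such an ideal is prime, and it contains all $h_i$; so the $U^P_{h_i}$ do not cover. Conversely, given a partition of unity $1=\sum a_jh_{i_j}$, the classical argument shows that $R\to\prod R[h_i^{-1}]\rightrightarrows\prod R[(h_ih_j)^{-1}]$ is an equalizer. That argument only uses addition and multiplication and raises the partition of unity to high powers, so it survives in semirings. The same holds after any base change. Hence $\Hom(-,Y)$ satisfies the sheaf axiom, and the family is subcanonical.

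\emph{Subcanonical implies covering.} Suppose some prime $\fp$ contains all $h_i$. I would mimic the argument for the Zariski topology. Base change along $R\to R_S$ with $S=R-\fp$, so that no $h_i$ becomes invertible. Then try to push further to a semiring $k$ in which all $h_i$ map to $0$, so that every $k[h_i^{-1}]$ is the zero semiring. Then the pulled-back family consists of empty schemes and is not subcanonical: test against $Y=\Spec(k\times k)$, or simply note that a family by empty objects cannot glue the two distinct points of $\Hom(\Spec k,\Spec(\N\times\N))$.

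I expect this to be the main obstacle. The natural candidate $k=\mathbb B$, via $x\mapsto 0$ for $x\in\fp$ and $x\mapsto 1$ otherwise, is only additive when $\fp$ is a $k$-ideal. For general primes I would instead look for a quotient of $R_S$ by the congruence generated by $h_i\sim 0$ that is nonzero. Alternatively, I would exhibit the failure of the sheaf axiom directly on $R_S$, for example with $Y=\Spec(\N\times\N)$, using that every element of $S$ is a unit in $R_S$.

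\textbf{Step 2 (identifying $\Lambda_X$).} Send an open subscheme $[\colim\cU_\cS\to X]$, with $\cS=\{\Spec R[h_i^{-1}]\to X\}$, to $\bigcup_i U^P_{h_i}$. For well-definedness and order reflection, show that $V\leq U$ holds if and only if each principal open $\Spec R[g^{-1}]$ of $V$ factors through $U$. By the sheaf property of colimits of atlases and Step~1 applied to $R[g^{-1}]$, this holds if and only if the family $\{\Spec R[(gh_i)^{-1}]\}$ covers $\Spec R[g^{-1}]$. That in turn is equivalent to $U^P_g\subseteq\bigcup U^P_{h_i}$, using $\PSpec R[g^{-1}]\cong U^P_g$. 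The map is surjective because every open subset of $\PSpec R$ is a union of sets $U^P_h$. Being an order isomorphism, it identifies $\Lambda_X$ with the locale of open subsets of $\PSpec R$, which finishes the argument.
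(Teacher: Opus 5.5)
\textbf{Gap: the converse direction of Step 1.} You prove only one half of your covering criterion, and the missing half is the substance of the theorem. You show that a partition of unity $1=\sum a_jh_{i_j}$ makes $\{\Spec R[h_i^{-1}]\to\Spec R\}$ universally subcanonical. You do not show the converse: that every family in $\cT=\gen{\cP}_\can$ satisfies $1\in\gen{h_i}$. Equivalently, you need that every prime $\fp$ containing all $h_i$ is detected by some base change on which the sheaf axiom fails. Your first suggested witness, a nonzero quotient killing the $h_i$, works when $\fp$ is a $k$-ideal.

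Neither suggestion works in the simplest non-$k$ case: $R=\N$, $h_1=2$, $h_2=3$, $\fp=\N\setminus\{1\}$. This $\fp$ is a prime ideal containing $2$ and $3$, and $1\notin\gen{2,3}$.
\begin{itemize}
\item Since $1+2=3$, every semiring in which $2$ and $3$ vanish is the zero semiring.
\item Since $S=\{1\}$, we have $R_S=\N$. On $\N$ the family satisfies the sheaf axiom for every representable, because $\N[1/2]$, $\N[1/3]$ and $\N[1/6]$ embed into $\mathbb{Q}_{\geq 0}$ with $\N[1/2]\cap\N[1/3]=\N$.
\end{itemize}
What does detect $\fp$ is base change to $T=\{0,1,\infty\}$, the quotient of $\N$ that identifies all $n\geq 2$. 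There $2,3\mapsto\infty$ and $\infty^2=\infty$. Hence $T\to T[\infty^{-1}]\cong\mathbb{B}$ identifies $1$ with $\infty$, and the sheaf axiom fails for $\Hom(-,\Spec\N[x])$. So in general the witness has to make the $h_i$ into nonzero non-units (here, absorbing idempotents) at which the sheaf axiom breaks, rather than into zero. Constructing such a witness for an arbitrary prime is exactly what separates $\PSpec R$ from the spectrum of prime $k$-ideals, and your proposal leaves it open.

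\textbf{Consequence for Step 2, and a minor slip.} The gap propagates into Step 2. Well-definedness and monotonicity of $[\colim_{\Sh_\fs}\cU_\cS\to X]\mapsto\bigcup_iU^P_{h_i}$ use precisely the missing direction, applied to $R[g^{-1}]$. With only the partition-of-unity direction you obtain an order-preserving surjection $\Op(\PSpec R)\to\Lambda_X$. Nothing then prevents it from sending $U^P_2\cup U^P_3$ to $\1$, which would delete the point $\N\setminus\{1\}$ from $\underline X$.

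The remaining ingredients are fine: the Zorn argument, the semiring version of the equalizer argument for partitions of unity, surjectivity onto opens, and the appeal to Stone duality. There is one slip in the sobriety argument. The generic point of an irreducible closed set $Z$ is $\bigcap_{\mathfrak q\in Z}\mathfrak q$, not the union. A union of ideals is in general not an ideal, and the generic point must be contained in every $\mathfrak q\in Z$.
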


Semischemes are defined as certain types of \emph{semiringed spaces}, which are topological spaces $X$ equipped with a sheaf $\cO_X$ in semirings. A \emph{local morphism of semiringed spaces} is a continuous map $\varphi:X\to Y$ together with a morphism $\varphi^\#:\varphi^\ast\cO_Y\to\cO_X$ of sheaves on $X$ such that the induced morphism of stalks $\varphi_x:\cO_{Y,\varphi(x)}\to\cO_{X,x}$ maps non-units to non-units for every $x\in X$.

The prime spectrum $X=\PSpec R$ of a semiring $R$ is a semiringed space with respect to its \emph{structure sheaf $\cO_X$}, which is the sheaf on $X$ with values in $\SRings$ that is characterized by $\cO_X(U^P_h)=R[h^{-1}]$ (thus, in particular, $\cO_X(X)=R$) and the requirement that the restriction map $\cO_X(U^P_h)\to\cO_X(U^P_{gh})$ is the localization of $R[h^{-1}]$ at $g$ for all $g,h\in R$. Since the $U^P_h$ form a basis for the topology of $\PSpec R$ (in particular, $U^P_g\cap U^P_h=U^P_{gh}$), the sheaf $\cO_X$ is uniquely determined by this requirement. That $\cO_X$ exists is significantly harder to prove than for usual schemes; see \cite[Thm.\ 3.18]{GKMX}.

The structure sheaf $\cO_X$ gives the prime spectrum $X=\PSpec R$ of a semiring $R$ the structure of a semiringed space. A semiring morphism $f:R\to R'$ induces a local morphism $f^\ast:\PSpec R'\to \PSpec R$ between prime spectra that maps a prime ideal $\fp$ of $R'$ to $f^{-1}(\fp)$ and whose induced map between global sections is $f$ itself. This defines a contravariant fully faithful embedding of $\SRings$ into the category of semiringed spaces, cf.\ \cite[Prop.\ 3.22]{GKMX}.

An \emph{affine semischeme} is a semiringed space that is isomorphic to the prime spectrum of a semiring. Every open subset $U$ of a semiringed space $X$ is a semiringed space with respect to the restriction of $\cO_X$ to $U$. A \emph{semischeme} is a semiringed space $X$ that has a covering by affine semiring schemes. A morphism of (affine) semischemes is a local morphism of semiringed spaces. We denote the category of affine semischemes as $\SAff$ and the category of semiring schemes as $\SSch$.

The composition of $\Gamma:\Aff_\N\to\SRings$ with $\PSpec:\SRings\to \SAff$ defines an equivalence of categories $\eta:\Aff_\N\to\SAff$.

\begin{thm}[{\cite[Thm.\ B]{L26}}]\label{thm: comparison2}
 The functor $\eta:\Aff_\N\to\SAff$ extends to an equivalence $\hat\eta:\Sch_\N\to\SSch$ that is uniquely determined up to unique isomorphism by the property that $\hat\eta\big(\colim_{\Sch_\N}\cU\big)\simeq \colim_{\SSch}\eta\circ\cU$ for every $\fs$-presentation $\cU$.
\end{thm}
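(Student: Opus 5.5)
We construct $\hat\eta$ on objects through $\fs$-presentations and on morphisms through \autoref{thm: morphisms of s-schemes are affinely presented}, and then check that it is fully faithful and essentially surjective. Everything rests on the statement that $\eta$ turns the gluing data of an $\fs$-presentation into gluing data of semiringed spaces. Let $\cU:\cI\to\Aff_\N$ be an $\fs$-presentation. Every morphism in $\cU$ is a principal open, so $\Gamma$ of it is a finite localization $R\to R[h^{-1}]$. By \cite[Thm.\ 3.18]{GKMX}, the semiring morphism $R\to R[h^{-1}]$ induces an isomorphism of $\PSpec R[h^{-1}]$ onto the open subspace $U^P_h$ of $\PSpec R$, together with its restricted structure sheaf. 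Hence $\eta\circ\cU$ is a diagram of open embeddings of affine semischemes. Monodromy freeness, in the intrinsic form of \autoref{rem: intrinsic characterization of monodromy free diagrams}, says that limits along closed paths agree with limits along the opened paths. Under $\eta$, these limits become intersections of open subsets. So the transition data satisfy the cocycle condition, and the colimit of $\eta\circ\cU$ in semiringed spaces is obtained by gluing the $\PSpec\Gamma U_i$ along open subsets. This colimit is a semischeme in which each $\eta(U_i)$ is an open subsemischeme. Set $\hat\eta(\colim_{\Sch_\N}\cU):=\colim_{\SSch}\eta\circ\cU$.

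To see that this is well defined and functorial, we use the functor of points. For a semischeme $Z$, put $h^Z=\Hom_{\SSch}(\eta(-),Z)$, a presheaf on $\Aff_\N$. Since principal covers map to open covers of prime spectra, and local morphisms glue along open covers, $h^Z$ is a sheaf on $\fs$. Moreover, $Z\mapsto h^Z$ is fully faithful on $\SSch$: a morphism of semischemes is determined by, and glued from, its restrictions to an affine open cover, and $\eta$ itself is fully faithful \cite[Prop.\ 3.22]{GKMX}. Next, $h$ carries the glued space $\colim_{\SSch}\eta\circ\cU$ to $\colim_{\Sh_\fs}\cU$. To prove this, compare sections over an affine $\Spec A$. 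A local morphism $\PSpec A\to\colim\eta\circ\cU$ pulls back the open cover $\{\eta(U_i)\}$ to an open cover of $\PSpec A$. Each such open set is a union of basic opens $U^P_g$, so we obtain a principal covering family of $\Spec A$, which lies in $\cT$ because $\cT=\gen{\cP}_\can$ and open covers of prime spectra are subcanonical. The sheaf condition then identifies the two sides. With this, $\hat\eta$ is the inverse of $h$ restricted to its image: it is well defined up to unique isomorphism, functorial by full faithfulness of $h$, and fully faithful. It satisfies the colimit property by construction. Uniqueness up to unique isomorphism holds because every object of $\Sch_\N$ is such a colimit, and, by \autoref{thm: morphisms of s-schemes are affinely presented}, every morphism is the colimit of a morphism of presentations.

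For essential surjectivity, take a semischeme $Z$ and an affine open cover $\{Z_i\}$. Cover each $Z_i\cap Z_j$ by basic opens $U^P_h$ of $Z_i$. These are affine and principal in both $Z_i$ and $Z_j$, because the intersection of a basic open of $Z_i$ with a basic open of $Z_j$ is basic in both. Arranging the resulting spaces and inclusions as in the proof of \autoref{prop: schemes as s-schemes} gives a diagram $\cU$ of principal opens. This diagram is monodromy free, since all its members embed into $Z$. Then $\hat\eta(\colim\cU)\simeq Z$.

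I expect the main obstacle to be the identification $h^{\colim\eta\circ\cU}\simeq\colim_{\Sh_\fs}\cU$. This is where the semiring subtleties enter. One must check that the subcanonical families of principal opens, i.e.\ the covers in $\cT=\gen{\cP}_\can$, are precisely those whose basic opens $U^P_{h_i}$ cover $\PSpec R$. This is essentially the content of \autoref{thm: comparison1} together with \autoref{prop: structure sheaf}. I would deduce it from these results: \autoref{thm: comparison1} matches $\Lambda_X$ with the open sets of $\PSpec R$, and \autoref{prop: structure sheaf} ensures that the structure sheaves agree on principal opens.
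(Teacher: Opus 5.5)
The paper gives no proof of this theorem and defers to \cite{L26}. The closest model is its outline for \autoref{prop: schemes as s-schemes}, and your plan is the natural adaptation of that outline: the functor of points $h^Z$, its full faithfulness, the identification $h^{\colim\eta\circ\cU}\simeq\colim_{\Sh_\fs}\cU$, and essential surjectivity via affine covers. You also isolate the crux correctly: the families in $\cT=\gen{\cP}_\can$ are exactly the families $\{\Spec R[h_i^{-1}]\to\Spec R\}$ with $\bigcup U^P_{h_i}=\PSpec R$.
\begin{itemize}
\item The ``only if'' direction can legitimately be deduced from \autoref{thm: comparison1}. For this, the canonical homeomorphism must send the open set of $[\Spec R[h^{-1}]\to X]$ to $U^P_h$, and you combine it with the spatiality of $\Lambda_X$ and with $\cS\in\cT$ iff the atlas colimit is all of $X$.
\item The ``if'' direction comes from the sheaf property of the structure sheaf of \cite{GKMX} together with the stability of open covers under base change.
\item \autoref{prop: structure sheaf} plays no role here.
\end{itemize}

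\textbf{The essential surjectivity step fails as written.} You claim that the intersection of a basic open of $Z_i$ with a basic open of $Z_j$ is basic in both. This is false. So is the weaker statement you actually use, that every basic open of $Z_i$ contained in $Z_i\cap Z_j$ is principal in $Z_j$. Both fail already for usual schemes, which are semischemes.
\begin{itemize}
\item For the first claim, take $h=1$ on both sides: $Z_i\cap Z_j$ itself need not even be affine, as in the affine plane with doubled origin.
\item For the second claim, take $Z=Z_j=E-\{O\}$ for an elliptic curve $E$, and $Z_i=Z_j-\{P\}$ with $P-O$ of infinite order. Then $Z_i$ is a basic open of itself lying in $Z_i\cap Z_j$. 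It is not principal in $Z_j$, since $\mathrm{div}(g)=n(P)-n(O)$ would force $P-O$ to be torsion.
\end{itemize}

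\textbf{How to repair it.} Use the affine communication argument. For $p\in Z_i\cap Z_j$, choose $f$ with $p\in U^P_f\subset Z_i\cap Z_j$ basic in $Z_i$. Then choose $g$ with $p\in U^P_g\subset U^P_f$ basic in $Z_j$. The inclusion $U^P_f\hookrightarrow Z_j$ is $\PSpec$ of a semiring map $\Gamma(Z_j)\to\Gamma(Z_i)[f^{-1}]$, which sends $g$ to some $a/f^n$. So $U^P_g$ is the basic open of $a/f^n$ in $\PSpec\Gamma(Z_i)[f^{-1}]$, which is $U^P_{af}$ in $Z_i$. Such doubly basic opens cover $Z_i\cap Z_j$ and give the diagram of principal opens you need.

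\textbf{A secondary omission.} For the glued space $G$, you compare $h^G$ with $\colim_{\Sh_\fs}\cU$ but only argue local surjectivity. Injectivity also needs an argument: $\eta(U_i)\cap\eta(U_j)\subset G$ is covered by the images of $\eta(\lim\widehat\Pi)$ for open paths $\Pi$ from $i$ to $j$. Hence two local lifts with the same image in $G$ are already identified in $\colim_{\Sh_\fs}\cU$.
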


%=======================================================
\subsection{Visualizations}
%=======================================================

The various characterizations of a prime ideal of a ring lead to diverging concepts in the generality of semirings, which means that different properties of ideals are captured by different quantities in the world of semirings and semiring schemes.

\autoref{thm: comparison1} shows that the prime ideals of a semiring reflect the most flexible way to glue prime spectra along principal opens \emph{without losing information}. It turns out that virtually all other interesting aspects of prime ideals behaves well with these gluing conditions, which leads to certain secondary topological spaces attached to a semiring scheme $X$, which we call visualizations of $X$, a term that is inspired by the introduction of \cite{fujiwara2017foundationsrigidgeometryi}.

The concise definition of a visualization is as follows. \autoref{thm: comparison2} provides us with a functor $\underline{(-)}:\Sch_\N\to\TopSp$ that sends a semiring scheme $X$ to the underlying topological space of the corresponding semischeme $\hat\eta(X)$, which extends the previously constructed functor $\underline X=\Lambda^\top_X$ for affine semiring schemes $X$ by \autoref{thm: comparison1}.

\begin{df}
 A \emph{visualization of $\Sch_\N$} is a functor $\cV:\Sch_\N\to\TopSp$ together with a natural transformation $\upsilon:\cV\to\underline{(-)}$. 
\end{df}

A visualization is \emph{compatible with $\fs$-presentations} if it satisfies that $\cV(\colim_{\Sch_\N}\cU)\simeq \colim_{\TopSp}\cV\circ\cU$ (functorially in $\cU$) for all $\fs$-presentation~$\cU$.

Let $\cV':\Aff\to\TopSp$ be a functor and $\upsilon':\cV'\to\underline{(-)}$ a natural transformation. We say that $\cV$ is an \emph{extension of $\cV'$ (with respect to $\upsilon'$)} if the restriction of $\cV$ is isomorphic to $\cV'$ in a way such that the restriction of $\upsilon$ to $\Aff$ coincides with $\upsilon'$. Note that up to unique isomorphism, there is at most one extension $\cV$ of $\cV'$ that is compatible with $\fs$-presentations.

In the following, we discuss some visualizations of semiring schemes.

%=======================================================
\subsubsection{The kernel space}
%=======================================================

Let $R$ be a semiring. A \emph{$k$-ideal} is an ideal $I$ of $R$ that contains $a$ whenever it contains $a+b$ for some $b\in I$. The class of $k$-ideals corresponds to the class of kernels $\ker f=f^{-1}(0)$ of semiring morphisms $f:R\to R'$. In particular, the quotient map $R\to R/I$ has kernel $I$ if and only if $I$ is a $k$-ideal. For details, cf.\ \cite[Section 1.3]{GKMX} or \cite{Golan1999}.

We define the \emph{$k$-spectrum of $R$} as the subspace $\PSpec^kR$ of $\PSpec R$ that consists of all prime $k$-ideals of $R$, which comes together with the inclusion map $\iota^k:\PSpec^kR\to\PSpec R$. The \emph{kernel space of a semiring scheme $X$} is the space $X^k$ defined by the following result.

\begin{prop}\label{prop: kernel space of a semiring scheme}
 There is a unique extension of $\PSpec^k\circ\Gamma:\Aff_\N\to\TopSp$ (with respect to $\iota^k$) to a visualization $(-)^k:\Sch_\N\to\TopSp$ that is compatible with $\fs$-presentations.
\end{prop}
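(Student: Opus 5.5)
Uniqueness is already noted in the text: an extension compatible with $\fs$-presentations is determined up to unique isomorphism, since every semiring scheme is $\colim_{\Sch_\N}\cU$ for some $\fs$-presentation $\cU$. So the work is existence. The plan is to define $X^k$ as the colimit $\colim_{\TopSp}\PSpec^k\circ\Gamma\circ\cU$ and then prove three things: independence of the choice of $\cU$, functoriality, and the existence of a natural map $\upsilon:X^k\to\underline X$.

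\emph{Local input.} The first step is to show that for a principal open $U\to Y$, i.e.\ a finite localization $R\to R[h^{-1}]$, the induced map $\PSpec^k R[h^{-1}]\to\PSpec^k R$ is an open topological embedding with image $\PSpec^kR\cap U^P_h$. This means checking that prime $k$-ideals of $R[h^{-1}]$ correspond bijectively to prime $k$-ideals $\fp$ of $R$ with $h\notin\fp$, via $\fq\mapsto\fq\cap R$ and $\fp\mapsto\fp R[h^{-1}]$. The key point is that $\fp R[h^{-1}]$ remains a $k$-ideal. Suppose $a/h^n+b/h^m\in\fp R[h^{-1}]$ with $b/h^m$ in it. Clearing denominators gives an equation $h^N(ah^m+bh^n)=ph^M$ in $R$ with $p\in\fp$. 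By primality $h^{M'}$-multiples can be cancelled into $\fp$, and then the $k$-property of $\fp$ gives that $a$ (times a power of $h$) lies in $\fp$.

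Combined with the fact that $\iota^k$ is a subspace inclusion and that $\PSpec$ sends principal opens to open embeddings (the $U^P_h$ are open), the diagram $\PSpec^k\circ\Gamma\circ\cU$ is a diagram of open embeddings. It sits inside $\PSpec\circ\Gamma\circ\cU$, whose colimit is the underlying space of $\hat\eta(X)$ by Theorem~\ref{thm: comparison2}.

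\emph{Existence, functoriality and independence of $\cU$.} For a monodromy-free $\cU$, the colimit $\underline X=\colim_{\TopSp}\PSpec\circ\Gamma\circ\cU$ is glued from open subspaces. I will show that $\colim\PSpec^k\circ\Gamma\circ\cU$ maps homeomorphically onto the subspace of $\underline X$ consisting of points that are $k$-primes in some, equivalently every, chart containing them. The equivalence "some versus every" is exactly the local input applied to overlaps. This identification gives $\upsilon$ and makes the construction independent of $\cU$.

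For functoriality, take a morphism $\varphi:X\to Y$ and use Theorem~\ref{thm: morphisms of s-schemes are affinely presented} to represent it by a morphism $\Phi$ of $\fs$-presentations. Then $\PSpec^k$ applied to arbitrary semiring maps $f$ preserves $k$-primes under $f^{-1}$, because preimages of $k$-ideals are $k$-ideals and preimages of primes are prime. So $\Phi$ induces a continuous map of colimits compatible with $\upsilon$, and well-definedness follows because it agrees with the restriction of $\underline\varphi$ to the subspaces. Compatibility with $\fs$-presentations holds by construction.

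The main obstacle I expect is the local input, namely that localization preserves the $k$-property of primes avoiding $h$. The denominator bookkeeping in semirings, where there is no subtraction and equality in $R[h^{-1}]$ means $h^N x=h^N y$, is where care is needed.
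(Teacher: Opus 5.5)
Your proposal is correct and follows the paper's route: Theorem~\ref{thm: comparison2} reduces everything to the underlying spaces of semischemes, and the key input is that prime $k$-ideals are compatible with localization at $h$, which the paper cites from \cite[Lemma 2.16]{GKMX} and you verify directly. Two routine points need tightening. First, uniqueness on morphisms needs Theorem~\ref{thm: morphisms of s-schemes are affinely presented}, as in the paper, and not only that every object is a colimit of an $\fs$-presentation. Second, independence of $\cU$ requires comparing charts from different presentations, and your ``overlaps'' argument only covers this once you note either that two affine opens share a neighbourhood of the point that is principal in both, or more cleanly that $\fp$ is a $k$-ideal iff $\fp R_\fp$ is, so the $k$-locus is intrinsic to the stalks.
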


\begin{proof}[Proof outline]
 The extension is unique since every morphism of semiring schemes is the colimit of a morphism of $\fs$-presentations by \autoref{thm: morphisms of s-schemes are affinely presented}. By \autoref{thm: comparison2}, the equivalence $\hat\eta:\Sch_\N\to\SSch$ is compatible with $\fs$-presentations, 
 which reduces the proof to verifying that the subset of $k$-ideals is independent of affine open covering of a semischeme $X$. This is the case since the subset of $k$-ideals of the prime spectrum commutes with localizations; cf.\ \cite[Lemma 2.16]{GKMX}.
\end{proof}

%=======================================================
\subsubsection{Congruences}
%=======================================================

Let $R$ be a semiring. A \emph{congruence on $R$} is an equivalence relation $\fc$ on $R$ that is additive and multiplicative in the sense that for all $(a,b)\in\fc$ and $c\in R$ also $(a+c,b+c)$ and $(ac,bc)$ are in~$\fc$. Different notions of primality for prime congruences can be found in the literature.

We say that a congruence $\fc$ on $R$ is
\begin{itemize}
 \item \emph{weak prime} if $(ab,0)\in \fc$ implies that $(a,0)\in \fc$ or $(b,0)\in \fc$ (\cite{Lescot2012});
 \item \emph{strong prime} if $(ab,ac)\in \fc$ implies that $(a,0)\in \fc$ or $(b,c)\in \fc$ (\cite{Lorscheid2012});
 \item \emph{twisted prime} if $(ac+bd,ad+bc)\in\fc$ implies that $(a,b)\in \fc$ or $(c,d)\in \fc$ (\cite{JooMincheva2018}).
\end{itemize}

\begin{lemma}
 A twisted prime congruence is a strong prime congruence. A strong prime congruence is a weak prime congruence. If $\fc$ is a weak prime congruence, then $I_\fc=\{a\in R\mid (a,0)\in\fc\}$ is a prime $k$-ideal.
\end{lemma}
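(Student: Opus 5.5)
All three implications follow by direct manipulation of the defining conditions. The plan is to specialize the quantified variables in the stronger primality condition so that it becomes the weaker one, and then check the ideal axioms for $I_\fc$ using only that $\fc$ is an equivalence relation compatible with addition and multiplication. Throughout, I use the standing convention that a prime congruence is proper, i.e.\ $(1,0)\notin\fc$. Without it the trivial congruence $R\times R$ would vacuously be weak prime, while $I_\fc=R$ is not prime, so this convention is needed for the last claim.

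\emph{Twisted prime implies strong prime.} Suppose $(ab,ac)\in\fc$. I apply the twisted prime condition to the quadruple $(a,0,b,c)$, i.e.\ with $b,c,d$ replaced by $0,b,c$. The relevant pair is
\[
(a\cdot b+0\cdot c,\;a\cdot c+0\cdot b)=(ab,ac)\in\fc .
\]
The twisted condition then gives $(a,0)\in\fc$ or $(b,c)\in\fc$, which is exactly the strong prime condition.

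\emph{Strong prime implies weak prime.} If $(ab,0)\in\fc$, then, since $a\cdot 0=0$, this reads $(ab,a\cdot 0)\in\fc$. The strong prime condition with $c=0$ yields $(a,0)\in\fc$ or $(b,0)\in\fc$.

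\emph{$I_\fc$ is a prime $k$-ideal.} Let $\fc$ be weak prime.
\begin{itemize}
\item \emph{Zero.} Reflexivity gives $0\in I_\fc$.
\item \emph{Closure under addition.} Let $a,b\in I_\fc$. Adding $b$ to $(a,0)$ gives $(a+b,b)\in\fc$. Combining this with $(b,0)\in\fc$ by transitivity gives $(a+b,0)\in\fc$.
\item \emph{Absorption.} Multiplying $(a,0)\in\fc$ by $r\in R$ gives $(ra,0)\in\fc$.
\item \emph{$k$-ideal property.} Suppose $a+b\in I_\fc$ and $b\in I_\fc$. Adding $a$ to $(b,0)$ gives $(a+b,a)\in\fc$. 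By symmetry and transitivity with $(a+b,0)\in\fc$, we get $(a,0)\in\fc$, so $a\in I_\fc$.
\item \emph{Primality.} We have $1\notin I_\fc$ by properness. The weak prime condition says precisely that $ab\in I_\fc$ implies $a\in I_\fc$ or $b\in I_\fc$. Hence $R-I_\fc$ contains $1$ and is closed under multiplication.
\end{itemize}

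I expect no real obstacle. The only subtle points are choosing the right specialization $(a,0,b,c)$ in the first step, and noticing that properness of $\fc$ must be part of the definition of a prime congruence for the final assertion to hold.
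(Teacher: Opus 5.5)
Your proposal is correct and follows the paper's proof: you use the same specialization $(a,0,b,c)$ of the twisted condition, the same choice $c=0$ in the strong condition, and the same symmetry--additivity--transitivity argument for the $k$-ideal property. You are more careful than the paper in two places: you check the ideal axioms for $I_\fc$, and you make explicit the properness convention $(1,0)\notin\fc$ needed for $1\notin I_\fc$, which the paper leaves implicit (without it, $\fc=R\times R$ satisfies the weak prime condition but $I_\fc=R$ is not prime).
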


\begin{proof}
 Let $\fc$ be a twisted prime congruence and consider $(ab,ac)\in \fc$. Since $(ab,ac)=(ab+0c,ac+0b)$, we have $(a,0)\in R$ or $(b,c)\in R$, which shows that $\fc$ is strong prime.

 Let $\fc$ be a strong prime congruence and consider $(ab,0)\in R$. Since $(ab,0)=(ab,0a)$, we have $(a,0)\in R$ or $(b,0)\in R$, which shows that $\fc$ is weak prime.

 Let $\fc$ be a weak prime congruence and consider $ab\in I_\fc$, i.e., $(ab,0)\in\fc$. Then $(a,0)\in\fc$ or $(b,0)\in\fc$ and thus $a\in I_\fc$ or $b\in I_\fc$, which shows that $I_\fc$ is prime. It is a $k$-ideal since if it contains $b$ and $a+b$, i.e., $(b,0),(a+b,0)\in\fc$, then by symmetry and additivity, also $(a,a+b)\in\fc$ and thus by transitivity $(a,0)\in\fc$, i.e., $a\in I_\fc$.
\end{proof}

Each of these three notions of prime congruences defines a visualization of $\Sch_\N$. Let $X=\Spec R$ be an affine semiring scheme. We define the \emph{weak congruence spectrum of $R$} as the set  $X^w=\Cong^w R$ of all weak prime congruences on $R$ together with the topology generated by open subsets of the form
\[
 U_{a,b}^w \ = \ \big\{ \fc\in X^w \, \big| \, (a,b)\notin\fc \big\}
\]
with $a,b\in R$. The map $\iota^w:X^w\to \underline X$ given by $\fc\mapsto I_\fc$ is continuous because the inverse image of $U^P_h$ is $U^w_{h,0}$. We define the \emph{strong congruence spectrum of $R$} as the subspace $X^s=\Cong^s R$ of strong prime congruences in $X^w$. 
We define the \emph{twisted congruence spectrum of $R$} as the subspace $X^t=\Cong^t R$ of twisted prime congruences in $X^w$. The respective compositions of $\iota^w$ with the embeddings into $X^w$ yield continuous maps $\iota^s:X^s\to\underline X$ and $\iota^t:X^t\to\underline X$.

The \emph{weak, strong and twisted prime congruence space of $X$} is defined in terms of the following proposition.

\begin{prop}\label{prop: weak prime congruence spaces}
 Let $x\in\{w,s,t\}$. There is a unique extension of $(-)^x$ with respect to $\iota^x$ to a visualization $(-)^x:\Sch_\N\to\TopSp$ that is compatible with $\fs$-presentations.
\end{prop}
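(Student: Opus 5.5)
The plan follows the outline of the proof of \autoref{prop: kernel space of a semiring scheme}. Uniqueness comes first and is formal. By \autoref{thm: morphisms of s-schemes are affinely presented}, every semiring scheme is $\colim_{\Sh_\fs}\cU$ for an $\fs$-presentation $\cU$, and every morphism is $\colim\Phi$ for a morphism $\Phi$ of $\fs$-presentations. Compatibility with $\fs$-presentations then forces $X^x\simeq\colim_{\TopSp}(-)^x\circ\cU$ on objects and $\varphi^x=\colim\Phi^x$ on morphisms. The natural transformation $\upsilon$ is also determined, as the colimit of the maps $\iota^x$ on the affine pieces.

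For existence, we first check functoriality on $\Aff_\N$. A semiring morphism $f:R\to R'$ pulls back congruences via $\fc\mapsto(f\times f)^{-1}(\fc)$. This pullback preserves weak, strong and twisted primality, since each condition is a Horn-type implication between pairs that is preserved by pulling back along $f$. It is continuous, because the inverse image of $U^w_{a,b}$ is $U^w_{f(a),f(b)}$. It is compatible with $\iota^x$, because $I_{(f\times f)^{-1}\fc}=f^{-1}(I_\fc)$.

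The key local statement is a localization lemma. For $h\in R$, the map $\Cong^x R[h^{-1}]\to\Cong^x R$ is an open topological embedding with image $(\iota^x)^{-1}(U^P_h)=U^w_{h,0}\cap\Cong^xR$. To prove it, send a congruence $\fc$ on $R$ with $(h,0)\notin\fc$ to the congruence on $R[h^{-1}]$ given by $(a/h^n,b/h^m)\sim(c/h^k,d/h^l)$ if and only if $(h^{m}a,h^{n}b)\in\fc$, with suitable shifts. Weak primality together with $(h,0)\notin\fc$ guarantees that multiplication by $h$ is cancellative modulo $\fc$. This cancellation is exactly what is needed for well-definedness, for the inverse property, and for matching the bases $U^w_{a/h^n,b/h^m}$ with $U^w_{a,b}\cap U^w_{h,0}$. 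This step is the main obstacle. The delicate point is that the cancellation of $h$ for weak prime congruences must be derived from the definition alone, not just for the strong or twisted notions. If it fails in the weak case, I would first try restricting the argument to congruences whose associated ideal $I_\fc$ misses $h$ and extracting cancellation from transitivity. With the lemma in hand, $\Cong^x$ turns principal opens into open embeddings that are cartesian over $\PSpec$, meaning that $(-)^x$ is local over $\underline{(-)}$.

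We now define $X^x$ as the fibre of the global picture. Using \autoref{thm: comparison2}, view $X$ as a semischeme $\hat\eta(X)$. Set $X^x=\colim_{\TopSp}(-)^x\circ\cU$ for an $\fs$-presentation $\cU$. By the localization lemma this colimit is obtained by gluing the spaces $U_i^x$ along open embeddings lying over the gluing of the $\underline{U_i}$. It therefore carries a continuous map $\upsilon_X:X^x\to\underline X$, whose restriction over each open $\underline{U_i}$ is $U_i^x$. Monodromy freeness guarantees that the glued map is injective on each piece, as in the proof of \autoref{prop: schemes as s-schemes}.

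Independence of the chosen presentation then follows. Given two presentations of $X$, pass to a common refinement by principal opens. The lemma shows that the preimage of each principal open under $\upsilon$ is canonically the same space $\Cong^x$ of the localization. Hence the colimits are canonically homeomorphic over $\underline X$.

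Functoriality in morphisms is defined via \autoref{thm: morphisms of s-schemes are affinely presented}, setting $\varphi^x=\colim\Phi^x$. The result is independent of the choice of $\Phi$ by the same refinement argument, and naturality of $\upsilon$ is checked locally on affine pieces.
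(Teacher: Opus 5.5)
Your route is the paper's: uniqueness because morphisms of semiring schemes are colimits of morphisms of $\fs$-presentations; then, via the comparison with semischemes, a reduction to the localization lemma $\Cong^x R[h^{-1}]\cong U^w_{h,0}\cap\Cong^x R$; then gluing. For $x\in\{s,t\}$ the step you single out as the main obstacle is immediate. If $(h,0)\notin\fc$ and $(hb,hc)\in\fc$, the strong prime axiom with $a=h$ gives $(b,c)\in\fc$, and twisted primes are strong primes. Clearing denominators then shows that the extended congruence on $R[h^{-1}]$ is again of type $x$, and the rest of your argument goes through.

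For $x=w$, however, your cancellation claim is false, and your fallback adds nothing: ``$I_\fc$ misses $h$'' is literally the condition $(h,0)\notin\fc$. Weak primality only says that $R/\fc$ has no zero divisors. Here is a counterexample.
\begin{itemize}
\item Let $L=\{0,c,a,b,1\}$ be the distributive lattice with $0<c<a<1$, $c<b<1$, $a\wedge b=c$ and $a\vee b=1$. View it as a semiring with $+=\vee$ and $\cdot=\wedge$.
\item $L$ has no zero divisors, so the diagonal $\Delta_L$ is weak prime and lies in $U^w_{a,0}\cap U^w_{b,0}$.
\item Yet $ab=ac$ with $b\neq c$, and $ba=bc$ with $a\neq c$.
\item Every congruence pulled back from $L[h^{-1}]$ is $h$-cancellative. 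So $\Delta_L$ lies in the image of neither $\Cong^w L[a^{-1}]$ nor $\Cong^w L[b^{-1}]$, and the localization lemma fails.
\end{itemize}

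This cannot be repaired. Since $a+b=1$, the family $\cS=\{\Spec L[a^{-1}]\to\Spec L,\ \Spec L[b^{-1}]\to\Spec L\}$ is a covering family; the usual gluing argument with $1=(a+b)^n$ needs no subtraction. Compatibility with its atlas $\cU_\cS$ would therefore force $\Cong^w L\simeq\colim_{\TopSp}(-)^w\circ\cU_\cS$.

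The two sides have different sizes.
\begin{itemize}
\item $L[a^{-1}]$ and $L[b^{-1}]$ are $3$-element chains, each with $3$ weak primes. They are glued along the single point coming from $L[c^{-1}]\cong\{0,c\}$, so the colimit has $5$ points.
\item $\Cong^w L$ contains the $5$ distinct pulled-back congruences and also $\Delta_L$, so it has at least $6$ points.
\end{itemize}

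So for $x=w$ the statement itself fails. The paper's outline asserts the same localization lemma uniformly for all three $x$ ``as for prime ideals'', so it has the same problem. Your argument, like the paper's, is correct exactly for $x\in\{s,t\}$.
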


\begin{proof}[Proof outline]
 The extension is unique since every morphism of semiring schemes is the colimit of a morphism of $\fs$-presentations by \autoref{thm: morphisms of s-schemes are affinely presented}. By \autoref{thm: comparison2}, the equivalence $\hat\eta:\Sch_\N\to\SSch$ is compatible with $\fs$-presentations, which reduces the proof to verifying that the congruence spectrum $\Cong^x R[h^{-1}]$ of a localization of $R$ is equal to the open subset $U^x_{h,0}=U^w_{h,0}\cap\Cong^x R$ of $\Cong^x R$, which can be proven in a similar way as in the case of prime ideals; cf.\ \cite[Lemma 2.16]{GKMX}.
\end{proof}

\begin{cor}
 Every $\fs$-scheme comes with a sequence
 \[
  \begin{tikzcd}
   X^t \ar[r,hook] & X^s \ar[r,hook] & X^w \ar[r,->>] & X^k \ar[r,hook] & \underline X
  \end{tikzcd}
 \]
 of continuous maps that is functorial in $X$. \qed
\end{cor}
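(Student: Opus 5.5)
The corollary is assembled from the visualizations already constructed, so I would first build the maps affine-locally and then glue them using compatibility with $\fs$-presentations.

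\emph{Affine maps.} Let $X=\Spec R$ be an affine semiring scheme. Write $X^k=\PSpec^k R$ and $\underline X=\PSpec R$, the latter identified by \autoref{thm: comparison1}. The inclusions $X^t=\Cong^tR\hookrightarrow\Cong^sR=X^s\hookrightarrow\Cong^wR=X^w$ are subspace inclusions by definition, since both $X^s$ and $X^t$ carry the subspace topology from $X^w$. They are well defined by the preceding lemma: twisted prime implies strong prime, which implies weak prime. The map $X^w\to X^k$ is $\fc\mapsto I_\fc$. By the lemma, $I_\fc$ is a prime $k$-ideal. The map is continuous because the preimage of $U^P_h\cap\PSpec^kR$ is $U^w_{h,0}$. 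It is surjective because for a prime $k$-ideal $\fp$, the congruence $\fc_\fp$ defined by $(a,b)\in\fc_\fp$ iff $a=b$ or $a,b\in\fp$ is a weak prime congruence with $I_{\fc_\fp}=\fp$. To check it is a congruence, note that if $a,b\in\fp$ then $a+c,b+c$ need not lie in $\fp$. So I would instead use the kernel congruence of $R\to R/\fp$: $(a,b)\in\fc$ iff $a+p=b+q$ for some $p,q\in\fp$. The $k$-property gives $I_\fc=\fp$, and weak primality follows from primality of $\fp$. Finally, $X^k\hookrightarrow\underline X$ is $\iota^k$.

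\emph{Naturality in affines.} For a morphism $f:R\to R'$, all the maps are pullbacks $\fc\mapsto(f\times f)^{-1}\fc$ or $\fp\mapsto f^{-1}\fp$. Since $I_{(f\times f)^{-1}\fc}=f^{-1}(I_\fc)$, all squares commute on the nose.

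\emph{Globalization.} For a general $X$, choose an $\fs$-presentation $\cU$ with $X\simeq\colim\cU$. By Propositions~\ref{prop: kernel space of a semiring scheme} and~\ref{prop: weak prime congruence spaces}, each space $X^x$ is $\colim_{\TopSp}(-)^x\circ\cU$. The affine maps form natural transformations of diagrams, so they induce continuous maps on colimits. Independence of $\cU$ and functoriality in $X$ follow from the functoriality of the compatibility isomorphisms together with \autoref{thm: morphisms of s-schemes are affinely presented}. Composing with the natural transformations $\upsilon$ to $\underline{(-)}$ keeps everything consistent with the maps into $\underline X$.

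\emph{Main obstacle.} The hard step is showing that injectivity and surjectivity survive the colimit. Injectivity of the embeddings follows because each $\underline{U_i}\to\underline X$ is an open embedding and the $x$-spaces over $U_i$ are preimages of $\underline{U_i}$; here I would use the localization statement from the proof of Proposition~\ref{prop: weak prime congruence spaces}. Surjectivity of $X^w\to X^k$ then holds because every point of $X^k$ lies in some $U_i^k$.
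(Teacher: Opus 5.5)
Your proposal is correct and is essentially the paper's argument. The paper records this corollary as immediate from the lemma on prime congruences together with Propositions~\ref{prop: kernel space of a semiring scheme} and~\ref{prop: weak prime congruence spaces}, and you spell out what it leaves implicit: the affine maps and their naturality, surjectivity of $X^w\to X^k$ via the Bourne congruence ($(a,b)\in\fc$ iff $a+p=b+q$ for some $p,q\in\fp$), and the argument over the cover $\{\underline{U_i}\}$ showing that injectivity and surjectivity survive gluing. In a final write-up, drop the discarded candidate $\fc_\fp$, which is not a congruence. Also, identifying $U_i^x$ with the preimage of $\underline{U_i}$ needs slightly more than cartesianness of the squares: you need that the localization statement makes $(-)^x$ preserve fibre products of principal opens, so that $(-)^x\circ\cU$ inherits monodromy freeness in the sense of \autoref{rem: intrinsic characterization of monodromy free diagrams}.
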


%\pagebreak[2]

%%%%%%%%%%%%%%%%%%%%%%%%%%%%%%%%%%%%%%%%%%%%%%%%%%%%%%%%%%%%%%%%%%%%%%%%%%%%%%%%%%%%%%%%%%%%%%%%%%%%%%%%%%%%%%%%%%%%%%%%
%%%%%%%%%%%%%%%%%%%%%%%%%%%%%%%%%%%%%%%%%%%%%%%%%%%%%%%%%%%%%%%%%%%%%%%%%%%%%%%%%%%%%%%%%%%%%%%%%%%%%%%%%%%%%%%%%%%%%%%%
\begin{small}
 \bibliographystyle{alpha}
 \bibliography{ref}
\end{small}
%%%%%%%%%%%%%%%%%%%%%%%%%%%%%%%%%%%%%%%%%%%%%%%%%%%%%%%%%%%%%%%%%%%%%%%%%%%%%%%%%%%%%%%%%%%%%%%%%%%%%%%%%%%%%%%%%%%%%%%%
%%%%%%%%%%%%%%%%%%%%%%%%%%%%%%%%%%%%%%%%%%%%%%%%%%%%%%%%%%%%%%%%%%%%%%%%%%%%%%%%%%%%%%%%%%%%%%%%%%%%%%%%%%%%%%%%%%%%%%%%
 
\end{document}